\newtheorem{theorem}{Theorem}
\newtheorem{lemma}[theorem]{Lemma}
\newtheorem{definition}{Definition}
\theoremstyle{definition}
\newtheorem*{example}{Example}
\newcommand{\pa}{\partial}
\title{Hyperbolicity of links in thickened surfaces}
\author{C. Adams, C. Albors-Riera, B. Haddock, Z. Li, D. Nishida, B. Reinoso and L. Wang}
\begin{document}

\begin{abstract}

Menasco showed that a non-split, prime, alternating link that is not a 2-braid is hyperbolic in $S^3$. We prove a similar result for links in closed thickened surfaces $S \times I$. We define a link to be fully alternating if it has an alternating projection from $S\times I$ to $S$ where the interior of every complementary region is an open disk. We show that a prime, fully alternating link in $S\times I$ is hyperbolic. Similar to Menasco, we also give an easy way to determine primeness in $S\times I$. A fully alternating link is prime in $S\times I$ if and only if it is ``obviously prime''. Furthermore, we extend our result to show that a prime link with fully alternating projection to an essential surface embedded in an orientable, hyperbolic 3-manifold has a hyperbolic complement. 

\end{abstract}

\maketitle

\section{Introduction}
\label{sec:intro}

A link $L$ in a manifold $M$ is hyperbolic if its complement $M\setminus L$ supports a hyperbolic metric. In~\cite{menasco}, W. Menasco showed that a non-split, prime, alternating link in $S^3$ that is not a 2-braid link is hyperbolic. In this paper, we prove an analogous result for links that are embedded in another family of manifolds, namely the closed thickened surfaces $S\times I$, such that $S$ has nonpositive Euler characteristic. Note that in this paper, we restrict our attention to a closed surface $S$, although we believe that a version of our results hold for a surface with boundary.

\begin{definition}
Let $L$ be a link in a thickened surface $S \times I$, orientable or not, with the exception of the sphere and the projective plane. A projection of $L$ from $S\times I$ to $S$ is fully alternating if it is alternating on $S$ and the interior of every complementary region is an open disk. We say a link $L$ is fully alternating in $S\times I$ if it has a fully alternating projection from $S\times I$ to $S$.
\end{definition}

A sphere $F$ in $S \times I$ punctured twice by $L$ is essential in $S \times I \setminus L$ if it does not bound a ball containing an unknotted arc of $L$.

\begin{definition}
A link $L$ is \emph{prime} in $S\times I$ if there does not exist an essential twice-punctured sphere in $S \times I \setminus L$ such that both punctures are created by $L$.
\end{definition}

\begin{theorem}
\label{thm:main}
Let $S$ be a closed orientable surface. A prime fully alternating link $L$ in $S \times I$ is hyperbolic.
\end{theorem}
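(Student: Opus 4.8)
The plan is to follow the strategy of Menasco's original argument, generalized to thickened surfaces, together with Thurston's hyperbolization theorem for Haken manifolds. By Thurston, it suffices to show that the compact manifold $M = (S \times I) \setminus N(L)$ is irreducible, boundary-irreducible, atoroidal, anannular, and not Seifert fibered (equivalently, that it admits no essential sphere, disk, torus, or annulus, and is not small-Seifert-fibered). The topological heart of the argument is a normal-form / incompressible-surface analysis for surfaces meeting the two "bubble" surfaces $S \times \{0\}$ and $S \times \{1\}$ (playing the role of Menasco's two projection-sphere copies), using the fully alternating diagram on $S$ to control how such surfaces intersect the complementary disks and the crossing balls.

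First I would set up the Menasco-style decomposition: thicken the diagram so that $S$ is replaced by two parallel copies, replace each crossing by a "crossing ball," and observe that the complementary regions of the diagram (which are disks by full alternation) cut the region between the two copies of $S$ into topological polyhedra whose faces alternate in a checkerboard fashion. Next I would prove the key normal-form lemma: any essential sphere, disk, torus or annulus in $M$ can be isotoped so that it meets the two copies of $S$ in a collection of simple closed curves, each of which is a "standard" curve in a complementary disk or runs over crossing balls in the standard way; the non-triviality of $\pi_1(S)$ (here $\chi(S) \le 0$ is used) and the alternation are what rule out the degenerate configurations that Menasco had to handle by hand. Then a combinatorial argument on these intersection curves — counting how they cross crossing balls and using primeness to exclude twice-punctured spheres — shows that an essential sphere or disk cannot exist, so $M$ is irreducible and boundary-irreducible; "obviously prime" (from the primeness discussion preceding the theorem) is invoked to close the sphere case cleanly.

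For the atoroidal and anannular conclusions I would argue that an essential torus or annulus, put in normal form, would have to lie in the part of $M$ lying over a single complementary disk or a single crossing region after the intersection-curve analysis collapses it, and then use incompressibility together with the structure of $S \times I$ to derive a contradiction — roughly, an essential annulus would force a "visible" product region or a 2-braid-like piece in the diagram, which full alternation plus primeness forbids, while an essential torus would have to be isotopic into $S \times I$ and hence boundary-parallel, contradicting essentiality. Finally, ruling out Seifert-fibered pieces follows since a hyperbolic-type diagram with $\chi(S)\le 0$ gives a complement with enough essential surfaces (the checkerboard surfaces) to have non-elementary, non-Seifert fundamental group; one also checks $M$ is Haken so Thurston applies.

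The main obstacle I expect is the normal-form lemma in the annulus case: controlling essential annuli is delicate because an annulus can wind around the thickened-surface direction, and Menasco's sphere-based "meridian-disk" combinatorics do not transfer verbatim. Handling this will require carefully exploiting that the complementary regions are disks (so the diagram is "cellular" on $S$) to show any essential annulus can be pushed off the crossing balls or else yields an explicit reducing/compressing configuration in the diagram, contradicting primeness or the alternating hypothesis.
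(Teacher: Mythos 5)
Your overall frame---Thurston's hyperbolization plus a Menasco-style bubble analysis of the curves in which a candidate essential surface meets the pushed-off copies $S_+$ and $S_-$ of the projection surface---matches the paper's strategy for spheres and tori, but as written there are genuine gaps. First, you never bring in meridional (pairwise) compressibility: the bubble combinatorics only yield a contradiction for \emph{meridionally incompressible} surfaces (the key configuration produces a once-punctured disk, i.e.\ a meridional compression, not an ordinary compression), and a separate argument is needed to kill meridionally compressible essential spheres and tori---this is exactly where primeness enters, since a meridional compression of an essential torus gives an essential twice-punctured sphere. Relatedly, because $S$ has positive genus you must first produce an intersection curve that is trivial on $S_+$ before any innermost-curve argument can start; the paper does this via an Euler-characteristic count on the $4$-valent graph the saddles induce on $F$ (using $\chi(F)\ge 0$), together with a lemma that a curve is trivial on $F$ if and only if it is trivial on $S_+$. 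Your remark that ``non-triviality of $\pi_1(S)$ rules out the degenerate configurations'' gestures at this but supplies neither step.

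Second, and more seriously, the annulus case is not actually handled, and you flag it yourself as the main obstacle. The bubble/Euler-characteristic machinery applies only to annuli with both boundary circles on $\partial(S\times I)$; for an essential annulus with one or both boundary components on $\partial N(L)$, your proposed ``visible product region or 2-braid-like piece in the diagram'' has no argument behind it, and such annuli are not controlled by the diagram in any direct way. The paper disposes of them by non-diagrammatic $3$-manifold topology: take a neighborhood $Q$ of the annulus together with the boundary tori it meets, invoke the classification of such $Q$ (twice-punctured disk times $S^1$, or a circle bundle over a punctured M\"obius band), use the already-established absence of essential tori and spheres to conclude $M$ is obtained from $Q$ by gluing on copies of $T\times I$ or solid tori, count boundary components (forcing genus one and a crossingless alternating projection, contradicting the crossing-number theorem for alternating links in $S\times I$), and in the mixed-boundary case double $M$ along $\partial(S\times I)$ and distinguish the result from a solid torus minus a $(p,q)$-curve by computing $H_1$. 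Without arguments of this kind (or a genuine substitute), anannularity---which Thurston's theorem requires---remains unproved, so the proposal does not yet establish the theorem.
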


In order to prove these results, we need to consider surfaces in 3-manifolds.

\begin{definition}
A surface properly embedded in a compact manifold is \emph{essential} if it is incompressible and not boundary-parallel.
\end{definition}

Note that throughout the paper, we refer to surfaces being punctured by $L$ or having boundary on $\partial{N}(L)$. In these cases we think of the surface as being embedded in $S \times I \setminus L$ or $S \times I \setminus \mathring{N}(L)$, as appropriate. We use the fact that $S \times I \setminus L$ is homeomorphic to the interior of  $S \times I \setminus \mathring{N}(L)$ as needed.
\\

Menasco further showed that a non-split, alternating link $L$ in $S^3$  is prime if and only if in any reduced alternating diagram, every circle in the projection crossing the link projection transversely twice  bounds a disk in the projection plane  containing no crossings of the projection~\cite{menasco}. We call this property ``obviously prime''. We extend this idea and determine the primeness of a link $L$ in $S\times I$, by looking at any given alternating projection of $L$ and seeing if it is obviously prime, as defined below.
\\

We define a reduced projection to be a projection that does not have any unnecessary crossings as in Figure~\ref{fig:reduced}. Note that if we start with a fully alternating projection, its reduction is also fully alternating.
\begin{figure}[h]
	\centering
	\includegraphics[scale=0.4]{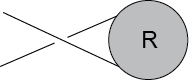}
	\caption{Crossing that can be reduced.}
	\label{fig:reduced}
\end{figure}

\begin{definition}
A reduced, fully alternating projection $P$ of a link $L$ in $S \times I$ onto $S$ is \emph{obviously prime} if every disk in the projection surface with boundary intersecting $P$ transversely at two points intersects $P$ in an embedded arc.
\end{definition}

\begin{theorem}
\label{thm:primeiffobv}
Let $L$ be a fully alternating link in $S\times I$ and let $P$ be a reduced fully alternating projection of $L$ to $S$. Then $L$ is prime if and only if $P$ is obviously prime.
\end{theorem}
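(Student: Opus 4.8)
The plan is to prove the two directions separately. The easy direction is the contrapositive of "obviously prime $\Rightarrow$ prime": if $P$ is not obviously prime, there is a disk $D$ in $S$ whose boundary meets $P$ transversely in two points and whose interior meets $P$ in something other than a single embedded arc — so it meets at least two strands, and since $P$ is reduced and fully alternating, at least one crossing lies inside $D$ and at least one crossing lies outside. I would push $\partial D$ slightly off the projection into $S \times I$ to get a sphere (capping $D$ on both sides of $S$) meeting $L$ in exactly two points, and then argue this sphere is essential: it cannot bound a ball containing an unknotted arc because an unknotted arc would force all crossings to one side, contradicting that both sides of $D$ contain crossings. This uses that $L$ is a nontrivial link with at least one crossing on each side and a Menasco-type innermost-disk argument applied to $L \cap (\text{ball})$. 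Hence $L$ is not prime.

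For the harder direction, "prime $\Rightarrow$ obviously prime," I would argue the contrapositive: assume $L$ is not prime, so there is an essential twice-punctured sphere $F$ in $S \times I \setminus L$, and produce a bad disk in $P$. This is where the bulk of the work lies, and it is the main obstacle. The strategy mirrors Menasco's original argument: put $F$ in a normal-like position with respect to the "bubble" decomposition of $S \times I$ associated to the alternating projection $P$ (replace each crossing by a small ball, and consider the two copies $S \times \{1\}$ and $S \times \{0\}$ of the projection surface pushed to the top and bottom). One shows that $F$ can be isotoped so that $F \cap (S \times \{1\})$ and $F \cap (S \times \{0\})$ are collections of simple closed curves, $F$ meets each bubble in a controlled collection of "saddle" disks, and there are no circles of intersection that are trivial in the obvious ways (innermost such circles get removed by standard cut-and-paste, using incompressibility of $F$ and irreducibility of the relevant pieces). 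The new feature compared to $S^3$ is that $S$ has genus, so curves of $F \cap (S \times \{i\})$ might be essential in $S \times \{i\}$; here I would invoke that the projection is fully alternating (every complementary region is a disk) together with incompressibility of $F$ to rule out essential curves, or to reduce their number — an essential curve of intersection would, after surgery, yield a compressing disk or an essential surface violating hyperbolicity/incompressibility in a way the full-alternation hypothesis forbids.

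Once $F$ is in this standard position, I would run the combinatorial count: look at the arcs and disks of $F$ cut along the two projection surfaces and along the bubbles, and track how the two punctures interact with this structure. As in Menasco, a minimal-complexity $F$ must meet only one bubble, or no bubble, and in either case the intersection pattern projects to a circle in $S$ meeting $P$ transversely twice that does not bound an embedded-arc disk — equivalently, a disk in $S$ whose boundary meets $P$ twice and whose interior contains a crossing or more than one strand. That is exactly a witness that $P$ is not obviously prime, completing the contrapositive. Throughout, I would lean on: incompressibility and non-boundary-parallelism of $F$ to perform the reductions; $S \times I \setminus L$ being irreducible with incompressible boundary (which should itself follow from $L$ being fully alternating, perhaps cited or proved as a preliminary lemma); and the fully alternating hypothesis to control curves that are essential in the surface factor. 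The delicate point — and the one I expect to consume the most effort — is adapting Menasco's innermost-disk exchanges to the thickened-surface setting where intersection curves need not bound disks in $S$, so that the complexity of $F$ genuinely decreases at each step and the argument terminates.
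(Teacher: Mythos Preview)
Your overall strategy matches the paper's --- both directions by contrapositive, the easy one by thickening a bad disk into a twice-punctured sphere, the hard one by putting an essential twice-punctured sphere $F$ in Menasco position relative to the bubbled surfaces $S_\pm$ --- but two concrete ingredients are missing, and without them the hard direction does not close. First, you never reduce to a \emph{meridionally incompressible} $F$. The paper does this at the outset (a meridional compression of a twice-punctured sphere yields two twice-punctured spheres; iterate). This matters because when you try to exclude an intersection arc that hits both sides of a single bubble, the disk you build from the innermost saddle is punctured \emph{once} by $L$; it is a meridional compression, not an honest one, so incompressibility of $F$ alone gives no contradiction. Second, your mechanism for ruling out curves essential on $S_+$ is off: you gesture at ``violating hyperbolicity,'' but hyperbolicity is not available here (Theorem~\ref{thm:primeiffobv} does not assume primeness, and in any case the argument would be circular). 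The actual reason is direct: any simple closed curve on a twice-punctured sphere bounds a disk or once-punctured disk on $F$, and after filling $L$ back in, such a disk would compress $S_+$ in $S\times I$, contradicting incompressibility of $S_+$.

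Your endgame is also too vague where the paper is sharp. The decisive combinatorial lemma is that \emph{every} intersection curve of $F\cap S_+$ must cross $L$ at least twice (one follows a curve through bubbles, tracks the left/right side of each overstrand using alternation, and sees that without punctures the curve is forced into one of the forbidden same-side or both-sides bubble configurations). Since $F$ carries only two punctures total, this pins down exactly one intersection curve $\alpha$, hitting $L$ exactly twice. One then shows $\alpha$ meets \emph{no} bubble (not ``one or no bubble''): if $\alpha$ met a bubble $B$, both arcs of the uppermost saddle would lie on $\alpha$, and either some arc of $\alpha\setminus B$ hits both sides of $B$ (excluded above), or replacing $\partial B_+$ by $\partial B_-$ produces an intersection curve nontrivial on the modified surface $S'$ but bounding on $F$, compressing $S'$ in $S\times I$. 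Only then does $\alpha$ project to the twice-punctured circle on $S$ that witnesses failure of obvious primeness.
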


To prove Theorems 1 and 2, we build on techniques first appearing in~\cite{menasco}. The results from~\cite{menasco} rely on a critical theorem by W. Thurston that a compact connected 3-manifold with torus boundaries has hyperbolic interior if and only if it does not contain any essential spheres, tori, or annuli (see~\cite{thurston}). 
\\\\
Menasco defines the notion of pairwise compressibility, which we adapt and refer to as meridional compressibility.

\begin{definition}
Let $L$ be a link in a 3-manifold $M$. Let $F$ be a surface embedded in $M\setminus L$. Then $F$ is \emph{meridionally incompressible} if for every disk $D$ with $D\cap F=\pa D$ which is punctured once by $L$, there exists another disk $D' \subset F \cup L$ with $\pa D = \pa D'$, such that $D'$ is punctured once by $L$,  and $D$ can be isotoped to $D'$ fixing its boundary. Otherwise $F$ is \emph{meridionally compressible}.
\end{definition}

In order to prove Theorem~\ref{thm:main}, we show in Section~\ref{sec:SphereTorus} that for a link $L$ in $S\times I$ that is prime and fully alternating, $S \times I \setminus L$ does not contain any incompressible, meridionally incompressible spheres, tori, or annuli with both boundary components on $\partial(S\times I)$. Further, we show that if an embedded sphere or torus is essential, then it cannot be meridionally compressible. Thus we have eliminated essential spheres, tori, and a subset of the essential annuli in $S \times I \setminus L$. We then show in Section~\ref{sec:annulus} that $S \times I \setminus L$ does not contain any other essential annuli. 
\\\\
We prove Theorem~\ref{thm:primeiffobv} in Section~\ref{sec:prime}, thereby providing a means of easily identifying primeness of links in $S\times I$ with fully alternating projections. 
\\\\
In Section~\ref{sec:extensions}, we generalize Theorem \ref{thm:main} to links in a neighborhood of an essential surface embedded in a hyperbolic manifold. 
Note that an essential surface in a hyperbolic 3-manifold must have negative Euler characteristic.

\begin{theorem}
\label{thm:threemanifold}
Let $M$ be a finite volume hyperbolic 3-manifold, possibly with cusps such that any boundary is total geodesic. Let $S$ be an essential, closed surface in $M$ with neighborhood $N$. For any link $L$ that is prime in $N$ with a fully alternating projection on $S$, the manifold $M\setminus L$ is hyperbolic.
\end{theorem}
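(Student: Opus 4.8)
The plan is to reduce the general statement to the already-established Theorem~\ref{thm:main} by a cut-and-paste argument along the essential surface $S$. Let me think about this.

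We have $M$ a finite-volume hyperbolic 3-manifold, $S$ an essential closed surface in $M$, $N \cong S \times I$ a product neighborhood, and $L \subset N$ prime in $N$ with a fully alternating projection to $S$. We want $M \setminus L$ hyperbolic.

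By Thurston's hyperbolization (the same tool used for Theorem~\ref{thm:main}), it suffices to show $M\setminus L$ is irreducible, atoroidal, and contains no essential annuli (and is not Seifert fibered / small exceptional cases, but those are ruled out by volume/boundary considerations). The strategy: suppose $F$ is an essential sphere, torus, or annulus in $M \setminus L$. We want to derive a contradiction.

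The key idea: put $F$ in a nice position with respect to $\partial N = S \times \partial I$ (two parallel copies of $S$). Since $S$ is essential (incompressible and not boundary-parallel) in $M$, and $M$ is hyperbolic hence irreducible and atoroidal, we should be able to isotope $F$ so that $F \cap \partial N$ is a collection of curves, none of which is trivial on $\partial N$ (compress/isotope away trivial curves using irreducibility and incompressibility of $S$). Then $F$ decomposes into a piece $F_N = F \cap N$ inside the thickened surface and a piece $F_{out} = F \cap (M \setminus \mathring N)$ outside.

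Here's where I'd work. The piece $F_N$ sits in $N \setminus L = (S\times I)\setminus L$, a thickened surface containing the prime fully alternating link $L$. By the work in Sections~\ref{sec:SphereTorus} and~\ref{sec:annulus} (which establish Theorem~\ref{thm:main}), there are no essential spheres, tori, or annuli with both boundary components on $\partial(S \times I)$ in $(S\times I)\setminus L$. More carefully, the results of those sections control \emph{all} incompressible, meridionally incompressible surfaces of nonnegative Euler characteristic in $(S\times I)\setminus L$, including ones with boundary on $\partial N$. The components of $F_N$ are planar or annular or one-holed-torus-like pieces with boundary on $\partial N$; by the Section~\ref{sec:SphereTorus}/\ref{sec:annulus} analysis each such component must be boundary-parallel in $N \setminus L$ (into $\partial N$, not into $\partial N(L)$, since $F$ is disjoint from $L$). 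Pushing those boundary-parallel pieces across, we reduce $|F \cap \partial N|$.

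When $F \cap \partial N = \emptyset$, either $F \subset N \setminus L$ — contradicting Theorem~\ref{thm:main} directly (no essential sphere/torus/annulus there) — or $F \subset M \setminus \mathring N \subset M$, disjoint from $L$; then $F$ would be an essential sphere, torus, or annulus in $M$ itself. The first two contradict hyperbolicity of $M$. For an essential annulus $A$ in $M \setminus \mathring N$: its boundary lies on $\partial N \cup \partial M$. If $\partial A \subset \partial M$, then $A$ is essential in $M$, contradicting hyperbolicity (recall $\partial M$ totally geodesic, so $M$ is acylindrical relative to its geodesic boundary). If $\partial A$ meets $\partial N$, the curves are essential on the incompressible surface $S$, and $A$ together with $S$ would give a compression or an essential torus/annulus in $M$, again contradicting that $S$ is essential and $M$ hyperbolic. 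One must also handle the possibility that $F$ is boundary-parallel in $M \setminus L$ toward a cusp or toward $\partial M$: rule this out because $F$ was assumed essential in $M \setminus L$ and the reduction process preserves this.

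The main obstacle, and the step needing the most care, is the innermost-curve normalization of $F \cap \partial N$: one must show that every essential surface in $M \setminus L$ can be isotoped to meet $\partial N$ only in curves essential on $S$, \emph{and} that the resulting thickened-surface pieces $F_N$ fall under the hypotheses (incompressible, meridionally incompressible, nonnegative Euler characteristic) of the Section~\ref{sec:SphereTorus}--\ref{sec:annulus} classification — in particular that meridional compressions of $F_N$ can be realized without creating new intersections with $\partial N$ or, when they cannot, that the meridionally compressed surface still yields a contradiction via the "essential $\Rightarrow$ not meridionally compressible" part of Section~\ref{sec:SphereTorus}. Handling the boundary-parallel-in-$N$ pieces correctly — making sure we parallel them off to $\partial N$ and not accidentally create intersections with $L$ or fail to actually reduce the intersection count — is the delicate bookkeeping. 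I would also need the small-exceptional-case check: $M \setminus L$ is not a solid torus, $T^2\times I$, or small Seifert fibered space, which follows since it contains the incompressible non-boundary-parallel surface $S \setminus L$ (or rather $S$ punctured by $L$, which has negative Euler characteristic because a fully alternating projection on $S$ with $\chi(S)\le 0$ forces crossings, hence punctures) together with the geodesic boundary/cusps of $M$.
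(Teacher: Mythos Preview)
Your cut-and-paste strategy along $\partial N$ is exactly the paper's approach: put an essential sphere, torus, or annulus $F$ in minimal position with respect to $\partial N$, observe that any piece of $F$ inside $N$ must be a disk (pushed off using incompressibility of $S$) or an essential annulus in $N\setminus L$ (ruled out by the work behind Theorem~\ref{thm:main}), and use incompressibility of $S$ together with hyperbolicity of $M$ to handle $F$ lying entirely in $M\setminus N$. The paper organizes the cases in a slightly different order---it first argues directly that $F$ cannot avoid $N$, then derives a contradiction from a minimal-intersection annular piece---but the content is the same, and your acknowledged ``main obstacle'' (the bookkeeping for boundary-parallel pieces and meridional compressions) is indeed routine once one notes that the pieces of $F\cap N$ are genuine annuli with boundary on $\partial N$, so Lemmas~\ref{lem:noMerIncSurface} and~\ref{lem:noMerComSphereTorus} apply verbatim.

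The one genuine omission is your standing assumption $N\cong S\times I$. The theorem as stated allows $M$ to be non-orientable and $S$ to be one-sided, in which case $N$ is a twisted $I$-bundle $S\widetilde{\times}I$. The paper treats this by first proving a separate lemma (Lemma~\ref{lem:nbd}) extending Theorem~\ref{thm:main} to arbitrary $I$-bundles over closed surfaces---lifting $L$ to a prime fully alternating link in an orientable double cover and descending via Mostow rigidity---and then reducing non-orientable $M$ to orientable $M$ by the same double-cover device. Without this step your argument covers only the case of orientable $M$ with a two-sided essential surface.
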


In order to prove this we first extend Theorem~\ref{thm:main} to prime, fully alternating links in any $I$-bundle over any closed surface that is either orientable or non-orientable. We then show that the hyperbolicity of the $I$-bundle implies the hyperbolicity of $M$.
\\\\
Theorem \ref{thm:threemanifold} substantially increases the number of manifolds known to be hyperbolic. See Section~\ref{sec:extensions} for examples. The proof of the Virtual Haken Conjecture by I. Agol (see~\cite{haken}) gives an embedded incompressible surface in some finite cover $M'$ of any compact hyperbolic 3-manifold. For each such manifold $M'$, Theorem~\ref{thm:threemanifold} generates an infinite family of finite volume hyperbolic link complements. 
\\\\
A second application of Theorem~\ref{thm:main} is to tiling theory. Specifically, consider a 4-regular tiling of $\mathbb{E}^2$ or $\mathbb{H}^2$ by edge-to-edge polygons such that the symmetry group of the tiling has compact fundamental domain. By adding alternating crossings at the vertices, the tiling becomes an infinite alternating weave. By taking the quotient by an orientation-preserving subgroup of the symmetry group of the tiling, we obtain the projection of a fully alternating link on a closed orientable surface $S$ of positive genus, where the link lives in $S\times I$. This association of tilings to hyperbolic links has appeared previously for certain Euclidean uniform tilings \cite{CKP} and  for Euclidean and hyperbolic $k$-uniform tilings in   ~\cite{tiling}. By applying Theorem~\ref{thm:main}, we can now turn a broader class of tilings, with polygons not necessarily regular, into hyperbolic links in $S\times I$. 
\\

If the link in $S\times I$ associated to a tiling is hyperbolic, then we can calculate the volume of its complement. We can then assign to the infinite tiling a volume density, given by the volume of the embedded link in $S \times I$ divided by the number of crossings of the projection of $L$ to $S$ in the fundamental domain. Thus we can apply hyperbolic invariants coming out of hyperbolic manifold theory to tilings. Note that by adding bigon faces to 3-regular tilings, we can turn them into 4-regular tilings to obtain similar results. See~\cite{tiling} for more details and explicit calculations.
\\

In recent work using different methods(cf. \cite{HP}), Howie and Purcell have proved a theorem that is more general than our Theorem \ref{thm:threemanifold}, in the case that the manifold and surface are orientable. They do not require the surface to be incompressible, but rather have a weaker condition on representativity in the manifold. They also investigate the volumes and geometries of the resulting manifolds. See also \cite{CKP} where hyperbolicity is proved for appropriate alternating links in a thickened torus, which is a case considered in our Theorem \ref{thm:main}. Note that in these two papers, our term "obviously prime" corresponds to what they define as "weakly prime".

\section{Eliminating essential, meridionally incompressible surfaces}
\label{sec:SphereTorus}

Let $L$ be a fully alternating link in $S \times I$, where $S$ is a closed orientable surface of genus $g\geq 1$. Let $F$ be an essential, meridionally incompressible surface embedded in $S \times I \setminus L$. We consider one of the following cases:
\begin{enumerate}
\item $F$ is a sphere.

\item There are no essential spheres in $S \times I \setminus L$, and $F$ is a torus.

\item There are no essential spheres in $S \times I \setminus L$, and $F$ is an annulus whose boundary components both lie on $\pa (S \times I)$.
\end{enumerate}

Let $S_0=S\times \{\frac{1}{2}\}$. Consider a fully alternating projection of $L$ onto $S_0$. As in~\cite{menasco} we place a ball at each crossing, which we hereafter refer to as a bubble $B$. We note that as $L$ is prime and fully alternating, the projection is connected.
\\

Let the overstrand of $L$ at each crossing run over the top of the bubble, and the understrand run under the bottom, as depicted in Figure~\ref{fig:bubble}. In particular, both the overstrand and the understrand are in $\partial B$. We then define $S_+$ to be $S_0$ where the equatorial disk in each bubble is replaced by the upper hemisphere, denoted $\pa B_+$. Similarly, define $S_-$ to be $S_0$ where the equatorial disk in each bubble is replaced by the lower hemisphere, denoted $\pa B_-$.
\\

\begin{figure}[h]
	\centering
	\includegraphics[scale=0.2]{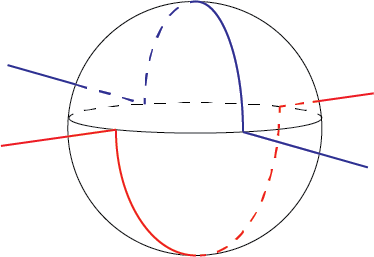}
	\caption{A bubble.}
	\label{fig:bubble}
\end{figure}

We clean up our surface $F$ relative to the bubbles by pushing $F$ radially away from the central vertical axis of each bubble $B$, as in~\cite{menasco}. As $F$ lives in the complement of $L$, we observe that what remains of $F$ forms saddles inside $B$, the boundaries of which lie on $\partial B$ and avoid the two arcs in $L\cap\partial B$. 
\\

We are interested in the intersection curves in $F\cap S_+$ and $F \cap S_-$. Hereafter we refer to $S_+$ and $F \cap S_+$, but consider all arguments and constructions as applied both to $F\cap S_+$ and $F \cap S_-$.
\begin{lemma}
\label{lem:trivialIffTrivial}
An intersection curve is trivial on $F$ if and only if it is also trivial on $S_+$.
\end{lemma}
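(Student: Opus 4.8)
The plan is to argue both directions by a standard innermost-disk / incompressibility argument, using crucially that $F$ is essential (incompressible and not boundary-parallel) and that $S_+$ has been built so that each complementary region of the projection is a disk.

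First I would prove the easy direction: if an intersection curve $\gamma \subset F \cap S_+$ is trivial on $S_+$, then it is trivial on $F$. Since $\gamma$ bounds a disk $D$ on $S_+$, and $F$ is incompressible in $S \times I \setminus L$, the curve $\gamma$ bounds a disk on $F$ as well --- but one must first check that $\gamma$ does not bound a once-punctured disk on $S_+$, i.e. that the disk $D$ on $S_+$ bounded by $\gamma$ misses $L$. Here is where I would use full alternation: a disk on $S_+$ whose boundary lies on $F$ (hence misses $L$ and the bubbles) and which is punctured once by $L$ would, after pushing off the bubbles, exhibit a meridian of a component of $L$; one then has to rule this out using meridional incompressibility of $F$, or observe directly that such a configuration forces $\gamma$ to be trivial on $F$ anyway. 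Once $D \cap L = \emptyset$, incompressibility of $F$ gives a compressing disk for $\gamma$ on $F$, so $\gamma$ is trivial on $F$.

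For the converse --- $\gamma$ trivial on $F$ implies $\gamma$ trivial on $S_+$ --- suppose $\gamma$ bounds a disk $E$ on $F$. I would take $\gamma$ to be innermost on $F$ among intersection curves bounding a disk, so that $\mathring{E} \cap S_+ = \emptyset$; then $E$ lies entirely in $S_+ \times [\tfrac{1}{2}, 1]$ or in $S_- \times [0,\tfrac12]$-type region (on the appropriate side of $S_+$). The boundary $\gamma = \partial E$ is a simple closed curve on $S_+$; I claim it bounds a disk there. Since $S \times [\tfrac12, 1]$ deformation retracts onto $S_+$ (minus the link, with the bubble modifications), the disk $E$ shows $\gamma$ is null-homotopic in $S_+$, and a null-homotopic simple closed curve on a surface bounds a disk. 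The one subtlety is that $\gamma$ could be essential in $S_+$ but bound a disk in $S\times I$ by passing "through" a bubble or around $L$; to exclude this I would use that $E \subset F$ misses $L$, and push $E$ off the bubbles, so the homotopy class argument genuinely takes place in $S_+ \setminus L$ and its retraction to $S_+$.

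The main obstacle I expect is the careful bookkeeping around the bubbles and punctures: one must ensure that "trivial on $S_+$" is interpreted as bounding an honest disk in $S_+$ (not a once-punctured disk, and not a disk passing through a bubble region), and that the two notions of triviality are compared after $F$ has been cleaned up relative to the bubbles. The meridional incompressibility hypothesis on $F$ is exactly what lets us discard the once-punctured-disk case on the $F$ side, and full alternation (every complementary region a disk) is what makes innermost curves on $S_+$ bound genuine disks. Assembling these two facts into a clean two-way implication, rather than the calculation itself, is where the care is needed.
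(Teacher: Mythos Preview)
Your outline has the right flavor—innermost disks plus incompressibility—but it misses the two specific devices the paper uses, and without them neither direction goes through cleanly.

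For ``trivial on $F$ $\Rightarrow$ trivial on $S_+$,'' the paper's key move is to \emph{fill the link back in} and argue in $S\times I$, where $S_+$ is isotopic to $S\times\{\tfrac12\}$ and hence incompressible. After an iterated innermost argument clearing the disk on $F$ of inner intersection curves, that disk becomes an honest compressing disk for $S_+$ in $S\times I$, a contradiction. Your retraction idea is essentially this, but your insistence on staying in $S_+\setminus L$ is misplaced: triviality on $S_+$ means bounding a disk on the surface $S_+$, and once $L$ is filled in the link plays no role. Note also that establishing the claim only for a curve innermost on $F$ does not prove it for an arbitrary trivial-on-$F$ curve; the paper argues by contradiction, selecting an innermost curve among those trivial on $F$ but nontrivial on $S_+$.

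For ``trivial on $S_+$ $\Rightarrow$ trivial on $F$,'' you try to feed the disk $D\subset S_+$ directly into the incompressibility of $F$. This does not work as stated: $D$ will typically contain other intersection curves (so $D\cap F\neq\partial D$), and $L$ meets $S_+$ in overstrand \emph{arcs}, not isolated points, so the obstruction is not a single meridional puncture and meridional incompressibility is not the relevant tool. The paper instead passes to an innermost curve $\gamma'$ on $S_+$ that is trivial on $F$, glues the disk it bounds on $F$ to the disk it bounds on $S_+$ to form a sphere, and invokes the standing hypothesis that $S\times I\setminus L$ contains no essential spheres (automatic in case~(1), assumed in cases~(2) and~(3)) to conclude this sphere bounds a ball and isotope $\gamma'$ away. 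Iterating yields a compressing disk for $F$ on $S_+$, a contradiction. Your plan never uses the no-essential-spheres hypothesis, and without it this direction does not close.
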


\begin{proof}
Suppose an intersection curve $\alpha$ is trivial on $F$ and nontrivial on $S_+$. We fill the link in to work in $S\times I$. Note that $S_+$, which is incompressible in $S\times I\setminus L$, is also incompressible in $S\times I$. Consider the set of intersection curves contained in the disk bounded by $\alpha$ on $F$ that are nontrivial on $S_+$. Take one that is innermost on $F$; call it $\beta$. Then all intersection curves contained in the disk bounded by $\beta$ on $F$ must be trivial on $S_+$. Of all such trivial intersection curves contained in the disk bounded by $\beta$ on $F$, let $\gamma$ be the one that is innermost on $F$. Then the disk bounded by $\gamma$ on $F$ does not contain any other intersection curves, so we can isotope $F$ to remove this intersection curve. We can iterate this process for all trivial intersection curves contained in the disk bounded by $\beta$ on $F$. Then $\beta$ would bound a compression disk of $S_+$ on $F$, a contradiction.
\\\\
Now suppose an intersection curve $\alpha'$ is trivial on $S_+$. In case (1), $\alpha'$ is clearly trivial on $F$. Consider cases (2) and (3), and note that by hypothesis $S \times I \setminus L$ does not contain any essential spheres. Assume for contradiction that $\alpha'$ is nontrivial on $F$. Consider the set of intersection curves contained in the disk bounded by $\alpha'$ on $S_+$ that are nontrivial on $F$. Take the one that is innermost on $S_+$; call it $\beta'$. Then all intersection curves contained in the disk bounded by $\beta'$ on $S_+$ must be trivial on $F$. Of all such trivial intersection curves contained in the disk bounded by $\beta'$ on $S_+$, let $\gamma'$ be the one that is innermost on $S_+$. We can consider the disks $D_1$ and $D_2$ bounded by $\gamma'$ on $F$ and $S_+$ respectively. Identify $D_1$ and $D_2$ along $\gamma'$ to get a sphere in $S \times I \setminus L$. Given that the boundaries $\pa (S \times I)$ are outside the sphere, and that there are no essential spheres in this case, this sphere must bound a ball. Therefore we can isotope the disk $D_1$ to the disk $D_2$, and push it slightly past $S_+$ to remove $\gamma'$. We can iterate this process for all trivial intersection curves contained in the disk bounded by $\beta'$ on $S_+$. Then $\beta'$ would bound a compression disk of $F$ on $S_+$, a contradiction.
\end{proof}

Thus we say that a component of $F\cap S_+$ is trivial if it is trivial on either $F$ or $S_+$. 
\\\\
We associate an ordered pair $(s,i)$ to each embedding of $F$ prior to isotopy, in which $s$ is the number of saddles in $F$ and $i$ is the number of intersection curves in $F\cap S_+$. Pick $F$ to be the embedding such that its ordered pair $(s,i)$ is minimal under lexicographical ordering. Note that as $F$ passes through a bubble $B$, the saddle corresponds to two intersection curves on $S_+$ that run parallel to the overstrand of $B$. We think of the overstrand as dividing $\pa B$ into two sides, and are interested in which side an intersection curve hits.

\begin{lemma}
\label{lem:isotopyBubbleCleanUp}
There exists an isotopy of $F$ such that the following are true:
\begin{enumerate}
\item[(i)] The set of intersection curves $F \cap S_+$ is nonempty.
\item[(ii)] Every intersection curve in $F \cap S_+$ intersects at least one bubble.
\item[(iii)] Let $\alpha$ be an arc of an intersection curve in $F \cap S_+$ that begins and ends on the same side of a bubble $B$. Then $\alpha \cup \pa B_+$ must contain a nontrivial simple closed curve on $S_+$.

\end{enumerate}
\end{lemma}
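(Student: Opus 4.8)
The plan is to show that the chosen embedding $F$, minimizing $(s,i)$ lexicographically, already satisfies $(i)$--$(iii)$: in each case I would assume the clause fails and produce an isotopy of $F$ that strictly lowers $(s,i)$, contradicting minimality. The standing tools are: $S \times I$ is irreducible and, as $S$ is closed of genus $g \geq 1$, it contains no essential sphere, no essential torus, and no essential annulus whose two boundary curves lie on the same component of $\partial(S \times I)$; $S_+$ is incompressible in $S \times I \setminus L$; and Lemma~\ref{lem:trivialIffTrivial}, which lets us transfer triviality of an intersection curve between $F$ and $S_+$.

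First, for $(i)$: if $F \cap S_+ = \emptyset$, then $F$ lies in one of the two components of $(S \times I) \setminus S_+$. After removing $L$, each such component is homeomorphic to $S \times I$ -- the component above $S_+$ meets $L$, away from its frontier, nowhere, and the one below meets it, away from its frontier, only in the understrands, which are boundary-parallel unknotted arcs whose removal does not change the homeomorphism type. An essential surface of $S \times I \setminus L$ remains incompressible in this product region, hence (being a sphere, a torus, or an annulus with both boundary curves on one component of the boundary) is inessential there, so $F$ is compressible or boundary-parallel in the product region and therefore in $S \times I \setminus L$; this contradicts essentiality. (When $F$ is an annulus with its two boundary curves on different components of $\partial(S\times I)$, the conclusion is immediate: neither component of $(S \times I)\setminus S_+$ meets both $S \times \{0\}$ and $S \times \{1\}$.) So $F \cap S_+ \neq \emptyset$, and the minimal $F$ has $i \geq 1$.

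Next, for $(ii)$: suppose an intersection curve $\alpha$ meets no bubble. Then $\alpha$ lies in $S_0$ away from the bubbles and, being disjoint from $L$, lies in the interior of a single complementary region of the projection; as the projection is fully alternating this region is an open disk, so $\alpha$ bounds a disk $\Delta \subset S_+$ disjoint from the bubbles and from $L$. Thus $\alpha$ is trivial on $S_+$, hence by Lemma~\ref{lem:trivialIffTrivial} trivial on $F$, bounding a disk $D_F \subset F$. Choosing $\alpha$ so that the interior of $\Delta$ contains no intersection curve and $D_F$ is innermost on $F$, the sphere $D_F \cup \Delta$ is disjoint from $L$ and bounds a ball $V$ in $S \times I$ -- by irreducibility of $S \times I$ together with the hypothesis of no essential sphere in $S \times I \setminus L$ in cases $(2)$ and $(3)$, and by irreducibility plus essentiality of $F$ in case $(1)$. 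Essentiality of $F$ (or, in case $(3)$, the fact that $\partial F \subset \partial(S \times I)$ while $V$ lies in the interior of $S \times I$) forces $F \cap \mathring{V} = \emptyset$, so $D_F$ can be isotoped across $V$ past $S_+$. This isotopy is supported away from the bubbles, so it lowers $i$ and leaves $s$ unchanged, contradicting minimality. Hence every intersection curve meets a bubble.

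Finally, for $(iii)$, which I expect to be the main obstacle: suppose $\alpha$ is an arc of an intersection curve with both endpoints on the same side of the overstrand of a bubble $B$, and suppose every simple closed curve in $\alpha \cup \partial B_+$ is trivial on $S_+$. Closing $\alpha$ up by an arc $\beta$ running in $\partial B_+$ then yields a simple closed curve that is null-homotopic on $S_+$ and so bounds a disk $D \subset S_+$. Among all such ``trivial returning arcs'' and bounding disks I would pick one with $D$ innermost -- containing no other intersection curve in its interior and meeting as few bubbles as possible. The plan is then to use $D$, together with the disk on $F$ obtained by transferring triviality through Lemma~\ref{lem:trivialIffTrivial} and capping off across balls (using absence of essential spheres, or irreducibility and essentiality of $F$ in case $(1)$), to guide an ambient isotopy of $F$ that pulls $F$ off the side of $B$ on which $\alpha$ returns, deleting a saddle of $F$ inside $B$ and creating no new saddles or intersection curves. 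This lowers $s$ and contradicts minimality. The hard part is exactly this last step: singling out the subsurface of $F$ to be pushed, verifying that it is a disk, and checking that pushing it through the bubble is realized by an ambient isotopy of $S \times I \setminus L$. Here one leans on the innermost choice of $D$, on irreducibility and the lack of essential spheres, and on the fact -- forced by minimality of $s$ -- that the boundary on $\partial B$ of every saddle of $F$ separates the overstrand from the understrand, so that a returning arc is precisely the defect that the isotopy removes.
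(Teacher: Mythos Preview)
Your arguments for (i) and (ii) are essentially the paper's, with the pleasant simplification in (ii) that you observe directly that a curve missing all bubbles must lie in a single complementary region, which is a disk; the paper instead splits into trivial and nontrivial cases and dispatches the latter by the same observation.

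For (iii), however, there is a genuine gap. Your plan is to close up $\alpha$ by an arc $\beta \subset \partial B_+$ to a curve bounding a disk $D \subset S_+$, and then invoke Lemma~\ref{lem:trivialIffTrivial} to obtain ``the disk on $F$''. But Lemma~\ref{lem:trivialIffTrivial} applies only to intersection curves in $F \cap S_+$; the closed-up curve $\alpha \cup \beta$ is not one, since $\beta$ does not lie on $F$. There is no natural disk on $F$ bounded by anything resembling $\alpha \cup \beta$, and the full intersection curve containing $\alpha$ may well be nontrivial on both $F$ and $S_+$, so no disk-swap across a ball is available here.

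The paper's argument for (iii) is different in kind. The key observation is that an arc returning to the \emph{same} side of $B$ meets $\partial B_+$ in arcs belonging to two \emph{distinct} saddles of $F$ in $B$ (a single saddle contributes one arc on each side of the overstrand). Passing to the innermost such pair, one isotopes a neighborhood of an arc on $F$ running alongside $\alpha$ toward the bubble, producing a band on $F$ joining the two saddles. When $\alpha \cup \partial B_+$ contains no nontrivial curve on $S_+$, this band together with the two saddles can be pushed straight through the bubble and out the far side, eliminating both saddles. This band move (from~\cite{adams toroidal}) is the three-dimensional picture your proposal does not identify; in particular the correct count is two saddles removed, not one.
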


\begin{proof}
\text{}
\begin{enumerate}[label=(\roman*), wide]
\item[(i)] We know that $F\cap S_+$ is nonempty, for otherwise $F$ would be either boundary-parallel or compressible.
\\
\item[(iii)] 
Note that an intersection arc $\alpha$ that satisfies the hypotheses of condition (iii)  must correspond to two saddles. If there is an additional intersection arc corresponding to a pair of saddles between these two, we consider the innermost pair of saddles. Using the technique in~\cite{adams toroidal} as in Figure~\ref{fig:bubbleSameSideTwice}, we isotope $F$ by pulling a neighborhood of an arc on $F$ to the bubble to form a band connecting the pair of saddles. If $\alpha \cup \pa B_+$ does not contain a simple closed curve that is nontrivial on $S_+$, we can pull the two saddles and the band through the bubble and out the other side. Note that we have decreased the number of saddles in $F\cap S_+$, contradicting that $F\cap S_+$ has the minimal number of saddles.

\begin{figure}[h]
\centering
\includegraphics[width=4cm, height=5cm]{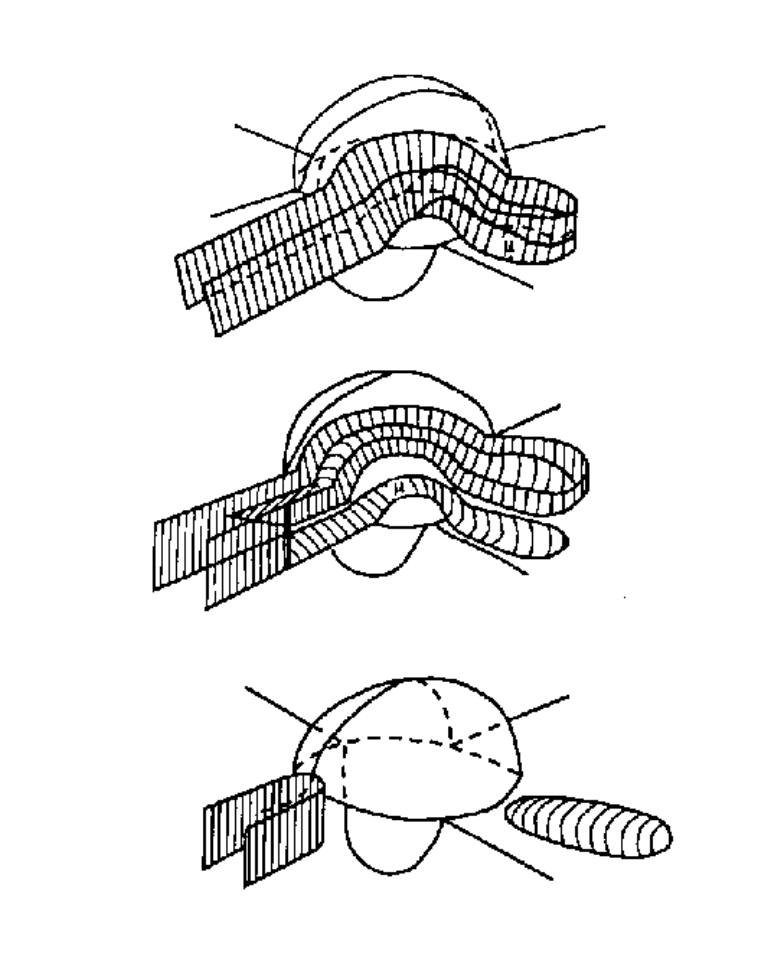}
\caption{Eliminate curve crossing a bubble twice on the same side~\cite{adams toroidal}.}
\label{fig:bubbleSameSideTwice}
\end{figure}

\item[(ii)] We additionally show that there are no intersection curves that do not hit any bubbles. Assume for contradiction that such a curve $\alpha$ exists. 
\\\\
Suppose $\alpha$ is trivial. Because the projection of $L$ is fully alternating, it is connected. Since $\alpha$ does not pass through any bubbles, there can be no link components contained in the disk bounded by $\alpha$ on $S_+$. Of the set of intersection curves contained in the disk bounded by $\alpha$ on $S_+$, choose the one that is innermost; call it $\beta$. Take the union of the disks $D_1$ and $D_2$ bounded by $\beta$ on $F$ and $S_+$ respectively to obtain a sphere in $S \times I \setminus L$, which must bound a ball. Therefore we can isotope $D_1$ to $D_2$, and push it slightly past $S_+$ to remove $\beta$. We have thus reduced the number of intersection curves without affecting the number of saddles, contradicting that $F$ has the least number of intersection curves among all isotopies with the same number of saddles.
\\\\
Now suppose $\alpha$ is nontrivial. Then, since the projection of $L$ onto $S_0$ is fully alternating, and as a nontrivial intersection curve on $S_0$ does not bound a disk, $\alpha$ must pass through at least two disk regions on $S_0$ and therefore pass through a bubble. This shows that $F$ must intersect at least one bubble.
\end{enumerate}
\end{proof}

Note that for the pairing $(s,i)$ defined in the lemma, we now know that both $s$ and $i$ are nonzero by parts (ii) and (i) respectively. 
\\\\
In order to contradict the existence of $F$ in $S\times I\setminus L$, we need an intersection curve that is trivial on $S_+$. By Lemma~\ref{lem:trivialIffTrivial}, it suffices to show that there exists a curve that is trivial on $F$.

\begin{lemma}
\label{lem:existsTrivialCurve}
There exists an intersection curve that is trivial on $F$.
\end{lemma}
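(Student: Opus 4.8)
The plan is to argue by contradiction: assume that every intersection curve in $F \cap S_+$ is nontrivial on $F$ (equivalently, by Lemma~\ref{lem:trivialIffTrivial}, nontrivial on $S_+$), and derive a contradiction with the minimality of the pair $(s,i)$, or with one of the structural constraints already established. The key object to analyze is the collection of arcs cut out on $F$ and on $S_+$ by the intersection curves, together with the saddles inside the bubbles, which link the picture on $S_+$ to the picture on $S_-$. First I would cut $F$ along all the intersection curves $F \cap (S_+ \cup S_-)$; since each saddle contributes two parallel arcs running alongside an overstrand on $S_+$ and two alongside an understrand on $S_-$, the pieces of $F$ that lie between consecutive levels are planar surfaces whose boundary arcs alternate between arcs on $S_+$, arcs on $S_-$, and arcs crossing bubbles. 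I would focus on an innermost or outermost such piece, ideally a disk piece of $F$ lying (say) above $S_+$ or between $S_+$ and $S_-$ inside the region of $S \times I$ above $S_0$.

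The main structural input is that the projection is reduced and fully alternating, so every complementary region of the projection on $S_0$ is a disk, and the alternating condition controls how the two arcs of a saddle sit relative to the overstrand. Following Menasco's original argument, I would look at a disk component of $F$ cut along the intersection curves that lies entirely in the region above $S_0$ (or below): its boundary is a union of arcs on $S_+$ and arcs passing over bubbles. Because $F$ has no meridional compressions (it is meridionally incompressible) and no trivial intersection curves (our contradiction hypothesis), such a disk must run over at least one bubble; tracing its boundary around, the alternating hypothesis forces the boundary arcs on $S_+$ to enter and leave each bubble it meets on opposite sides (by part (iii) of Lemma~\ref{lem:isotopyBubbleCleanUp}, if it hit the same side twice it would yield a nontrivial curve, and an innermost-disk analysis rules this out). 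Then I would chase this disk down through the bubbles it meets: each time it passes over a bubble on $S_+$, the corresponding saddle connects it to a disk piece on the $S_-$ side running under the understrand. Iterating, one obtains a sequence of disks alternately above $S_+$ and below $S_-$, each determined by a complementary region of the projection.

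The crux, and the step I expect to be the main obstacle, is to show this sequence of disks must \emph{close up} — i.e., that one eventually returns to a disk whose boundary curve on $S_+$ is the same as one already encountered, forcing an innermost intersection curve to bound a disk on $F$ with no bubbles or with a configuration that contradicts either meridional incompressibility or the minimality of $(s,i)$. This is essentially Menasco's "no-sphere" argument, but carried out on $S$ rather than $S^2$; the genus of $S$ is what makes it delicate, since a curve can be nontrivial on $S_+$ without bounding a disk, so I cannot simply invoke the Jordan curve theorem to get an innermost disk region. To handle this I would use that the pieces of $F$ are genuine subsurfaces of the surface $F$ (a sphere, torus, or annulus), so Euler characteristic bookkeeping on $F$ cut along the intersection curves, combined with part (i)--(ii) of Lemma~\ref{lem:isotopyBubbleCleanUp} (the intersection set is nonempty and every curve hits a bubble), forces the existence of a disk piece; and then the reducedness of the projection (no nugatory crossings) prevents the degenerate "bigon" closure that would otherwise escape the contradiction. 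The final contradiction is with the minimality of $s$ (we can cancel a pair of saddles) or with meridional incompressibility (the closed-up disk would be a meridional compression).
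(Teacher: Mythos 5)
The one sentence in your proposal that matches what is actually needed is the remark near the end about ``Euler characteristic bookkeeping on $F$ cut along the intersection curves'': in the paper that \emph{is} the whole proof, and you should have led with it and carried it out. Concretely: the sphere case is immediate, and for the torus/annulus cases one projects both $F\cap S_+$ and $F\cap S_-$ onto $F$, collapses each saddle quadrilateral to a vertex to get a $4$-regular graph with $\mathcal{V}=s>0$ vertices (positivity coming from Lemma~\ref{lem:isotopyBubbleCleanUp}~(i)--(ii)) and $\mathcal{E}=2s$ edges; if no complementary face were a disk then $\chi(F)\leq \mathcal{V}-\mathcal{E}=-s<0$, contradicting $\chi(F)=0$. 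As written, though, your count is not set up correctly: if you literally cut $F$ along all the intersection curves, the pieces lying between $S_+$ and $S_-$ are exactly the saddles, which are already disks, so ``there exists a disk piece'' is vacuous; you must treat the saddles as vertices (or otherwise exclude them) so that the disk you produce is a \emph{face}, lying above $S_+$ or below $S_-$. More importantly, you never supply the step that turns a disk face into the lemma's conclusion: one needs that the boundary of such a face is a \emph{single} intersection curve of $F\cap S_+$ (or of $F\cap S_-$), which holds because at each saddle corner the face boundary continues from an arc of $F\cap S_0$ onto the top (resp.\ bottom) side of that same saddle, i.e.\ it follows one and the same curve of $F\cap S_+$ (resp.\ $F\cap S_-$); that curve then bounds the disk face and is trivial on $F$. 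Your description of the pieces' boundaries as alternating between arcs on $S_+$, arcs on $S_-$, and arcs crossing bubbles is inaccurate for the faces and obscures precisely this point.

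The bulk of the proposal --- chasing disks through bubbles in Menasco's style, worrying about whether the sequence of disks ``closes up,'' invoking reducedness to exclude bigons, and ending with a contradiction to minimality of $s$ or to meridional incompressibility --- is machinery for the later steps of the paper (Lemmas~\ref{lem:noCurveBubbleBothSides} and~\ref{lem:noMerIncSurface}), not for this lemma, and your proposed endgame is the wrong one: once the Euler characteristic count yields a disk face, the trivial curve is exhibited directly, with no appeal to alternation, reducedness, minimality of $(s,i)$, meridional incompressibility, or Lemma~\ref{lem:trivialIffTrivial}. So while the correct one-line idea is present, the proposal as it stands has a genuine gap: the count is not formulated so as to exclude the saddle disks, and the bridge from a disk region of the graph to a trivial intersection curve is missing, with the surrounding (irrelevant) argument pointing the proof toward a conclusion it does not need.
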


\begin{proof}
First note that Lemma~\ref{lem:existsTrivialCurve} holds in case (1), as all curves on a sphere are trivial. Now consider cases (2) and (3), and suppose $F$ is a torus or an annulus.
\\\\
Consider $F$ with all intersection curves both in $F\cap S_+$ and $F\cap S_-$ projected onto it. All saddles correspond to quadrilaterals, which we collapse to vertices to obtain a 4-regular graph on $F$, as in Figure~\ref{fig:EulerCharacteristic}. Notably, for any two adjacent faces in this graph, one corresponds to a region which is contained strictly above $S_+$, and the other to a region contained strictly below $S_-$.\\\\

\begin{figure}[h]
\centering
\begin{subfigure}{0.45\textwidth}
\includegraphics[scale=0.1]{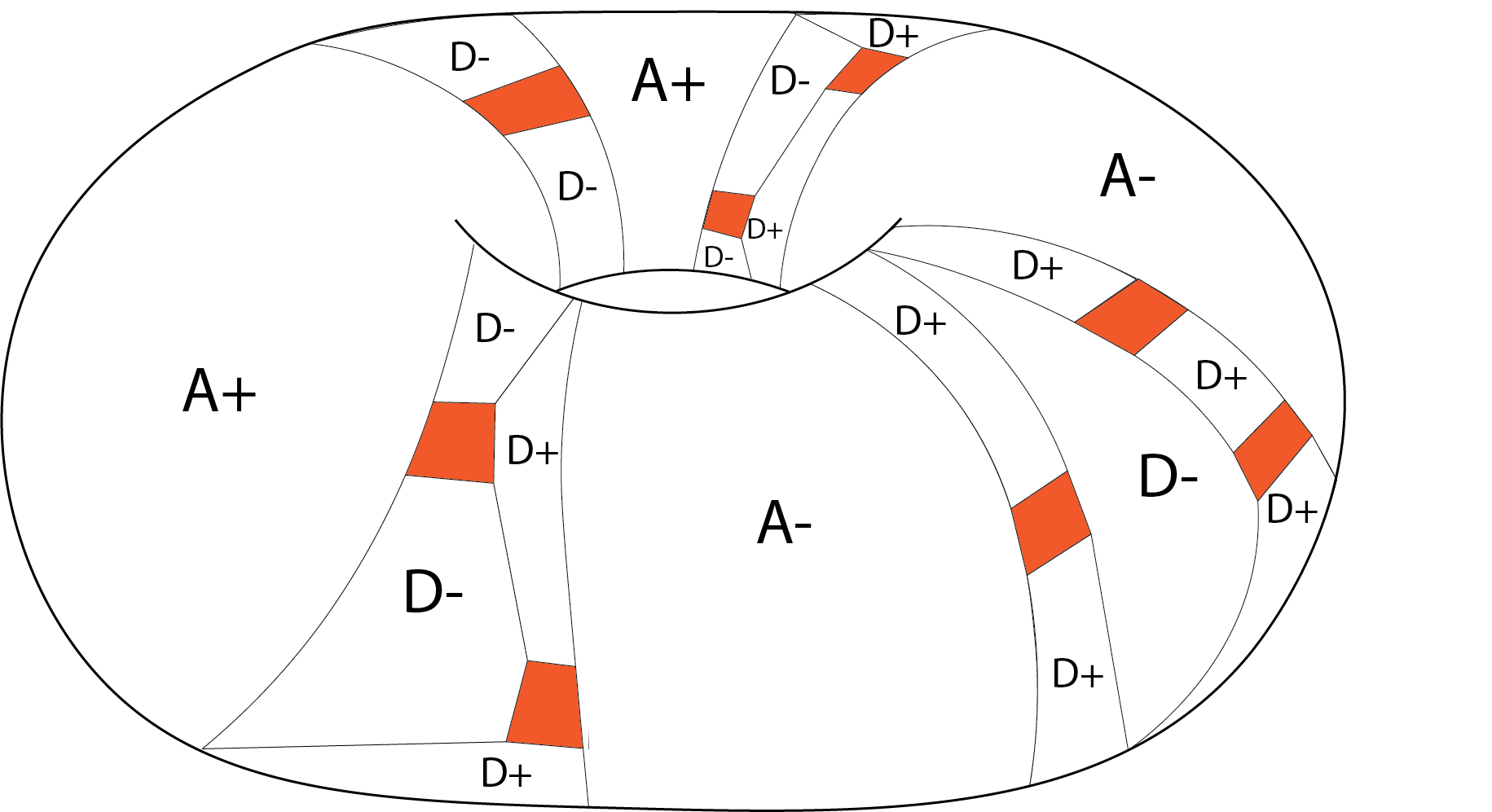}
\end{subfigure}
\begin{subfigure}{0.45\textwidth}
\includegraphics[scale=0.1]{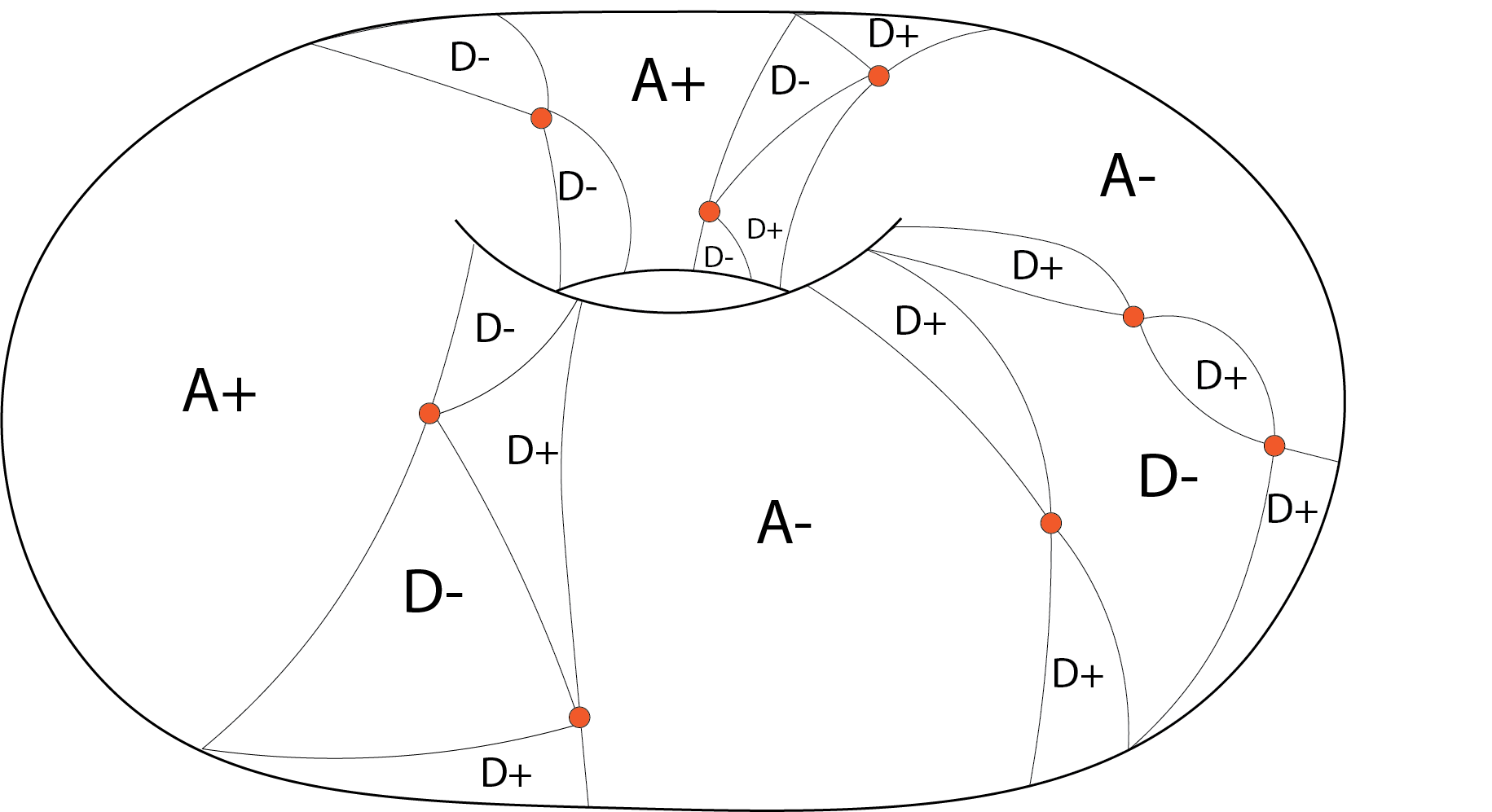}
\end{subfigure}
\caption{Torus $F$ with projection of intersection curves from $F\cap S_+$ and $F\cap S_-$. Annular regions denoted $A$, disk regions denoted $D$; regions contained above $S_+$ denoted $+$, regions contained below $S_-$ denoted $-$. Quadrilateral saddles (left) collapsed to vertices (right). }
\label{fig:EulerCharacteristic}
\end{figure}

We would like to show that at least one of the complementary regions of this graph is a disk, whose boundary corresponds to a trivial intersection curve of either $F \cap S_+$ or $F \cap S_-$. If none of the faces formed by the $4$-regular graph on $F$ are disks, then all faces must have nonpositive Euler characteristic contribution. 
We have:
$$\mathcal{V}=\text{number of saddles}>0$$
$$\mathcal{E}=\frac{4\mathcal{V}}{2}=2\mathcal{V}$$
$$\chi(F) = \mathcal{V} - \mathcal{E} +\mathcal{F} \leq \mathcal{V} - \mathcal{E} = \mathcal{V}-2\mathcal{V}<0$$
But the Euler characteristic of $F$ (a torus or an annulus) is $0$, contradiction. Thus there must be a disk region in the complement of the graph on $F$, which is bounded by a trivial intersection curve $\alpha$ in either $F\cap S_+$ or $F\cap S_-$. For convenience, we assume it is in $F\cap S_+$.
\end{proof}

Now we show that no trivial curve $\alpha$ intersects a bubble $B$ on both sides, such that the disk on $S_+$ bounded by $\alpha$ contains $L\cap \pa B_+$.

\begin{lemma}
\label{lem:noCurveBubbleBothSides}
Let $\alpha$ be an arc of an intersection curve in $F \cap S_+$ that begins and ends on different sides of a bubble $B$. Then $\alpha \cup \pa B_+$ must contain a nontrivial simple closed curve on $S_+$.
\end{lemma}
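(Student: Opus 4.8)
The plan is to argue by contradiction, mimicking the structure of the proof of Lemma~\ref{lem:isotopyBubbleCleanUp}(iii), but now handling an arc that enters and exits the bubble on \emph{opposite} sides of the overstrand. So suppose $\alpha$ is an intersection arc in $F\cap S_+$ that begins and ends on different sides of the bubble $B$, and suppose toward a contradiction that $\alpha\cup\pa B_+$ contains no nontrivial simple closed curve on $S_+$. First I would note, as in part (iii), that such an arc $\alpha$ forces the saddle structure at $B$: the two endpoints of $\alpha$ on $\pa B$ correspond to (parts of) saddles of $F$ inside $B$, and I want to reduce to an innermost configuration, so if there are other intersection arcs running through $B$ between $\alpha$'s two strands I pass to the innermost such pair.

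The key geometric move is the band construction from~\cite{adams toroidal} used in Figure~\ref{fig:bubbleSameSideTwice}: I would pull a neighborhood of an arc on $F$ down to the bubble to form a band that joins the relevant pair of saddles, so that $F$ near $B$ now looks like a single band-with-two-saddles sitting over the bubble. Since $\alpha\cup\pa B_+$ bounds only trivial simple closed curves on $S_+$, the arc $\alpha$ together with an arc of $\pa B_+$ cuts off a disk $D\subset S_+$; I can then use $D$ to guide an isotopy of $F$ that slides this band-plus-saddle assembly across the bubble and off the other side (or, equivalently, cancels the two saddles), exactly as in the proof of (iii). The effect is to \emph{decrease the number of saddles} of $F$, contradicting the minimality of the pair $(s,i)$. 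The one thing to be careful about is the hypothesis ``such that the disk on $S_+$ bounded by $\alpha$ contains $L\cap\pa B_+$'' mentioned in the sentence preceding the lemma: because $\alpha$ passes the bubble on both sides, the two arcs of $L$ in $\pa B_+$ lie on opposite sides of $\alpha$, so whichever disk $\alpha$ cuts off contains at most one of them; I would track which side of the overstrand each endpoint of $\alpha$ sits on and verify that the band isotopy can be chosen to avoid $L$, using that $F\subset S\times I\setminus L$ throughout.

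I expect the main obstacle to be a genuinely new case that does not arise in part (iii): when $\alpha$ crosses the bubble on both sides, sliding the saddles off one side of $B$ may drag them back toward the \emph{other} strand of $L$ passing through $B$, so the naive isotopy is not obviously supported in the link complement. To handle this I would argue that the disk $D\subset S_+$ cobounded by $\alpha$ and a sub-arc of $\pa B_+$ can be taken to miss $L$: $D$ meets $L$ only possibly in points of $L\cap\pa B_+$ on $\pa B_+$, and since $\alpha$ separates the two link strands in $\pa B_+$, at most one strand can puncture $D$; if one does, I first do an innermost-disk exchange to remove that puncture (using that $L\cap D$ would then be an unknotted arc cut off by a meridian disk, contradicting either primeness or the minimality via a count of saddles), so $D$ becomes a clean disk and the isotopy proceeds. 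Combined with Lemma~\ref{lem:isotopyBubbleCleanUp}(iii), this lemma will mean that \emph{every} arc of an intersection curve meeting a bubble, whether it returns to the same side or crosses to the other, forces a nontrivial simple closed curve in $\alpha\cup\pa B_+$, which is the input needed for the subsequent analysis of how intersection curves wind around $S_+$.
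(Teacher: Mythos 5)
There is a genuine gap: your central move does not work in the configuration this lemma addresses, and your proof never uses the hypothesis that is actually needed, namely meridional incompressibility of $F$. When the arc $\alpha$ meets $B$ on the \emph{same} side of the overstrand (the situation of Lemma~\ref{lem:isotopyBubbleCleanUp}~(iii)), the two saddles plus the connecting band can be slid through the bubble and out the other side, reducing the saddle count. But when $\alpha$ enters and exits on \emph{opposite} sides of the overstrand, the arc $\mu\subset F$ running along $\alpha$, together with the saddle, forms a loop that encircles the overstrand of $L$ once. No isotopy supported in $S\times I\setminus L$ can slide that assembly off the bubble, because the overstrand is linked with it; so the claimed saddle-reducing isotopy simply cannot be performed, and minimality of $(s,i)$ yields no contradiction. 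Your attempted repair --- showing the disk $D\subset S_+$ cut off by $\alpha$ misses $L$, or removing a puncture by an innermost-disk exchange appealing to primeness --- does not resolve this: the overstrand genuinely punctures the relevant disk exactly once (that is forced by $\alpha$ hitting both sides of the overstrand), primeness is not a hypothesis in force in Section~\ref{sec:SphereTorus} for this lemma, and no exchange in the link complement can remove a single puncture. Moreover, after the reduction to the innermost position the arc corresponds to a \emph{single} saddle, not a pair, so the two-saddle cancellation mechanism of part~(iii) is not even available.

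The paper's proof turns this obstruction into the contradiction rather than trying to isotope past it. One first chooses $\alpha$ among all arcs satisfying the hypotheses to be the one meeting $\pa B_+$ closest to the overstrand; Lemma~\ref{lem:isotopyBubbleCleanUp}~(iii) then rules out any intersection curve between the two strands of $\alpha$ inside $\pa B_+$, so $\alpha$ is innermost and corresponds to one saddle $\sigma$. Pulling $\mu$ over the bubble (using triviality of $\alpha$, and hence of any intersection curves in the region bounded by $\sigma\cup\mu$, to isotope across the disks they bound on $F$), one obtains a simple closed curve in $\mu\cup\sigma$ wrapping once around the overstrand, i.e.\ the boundary of a once-punctured disk meeting $F$ only in its boundary. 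This is a meridional compression disk for $F$, contradicting the assumption that $F$ is meridionally incompressible. Since this lemma is precisely where meridional incompressibility is consumed in the argument of Section~\ref{sec:SphereTorus}, a proof that contradicts only saddle minimality cannot be correct as stated.
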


\begin{proof}
Of all arcs of $F \cap S_+$ that satisfy all the hypotheses, choose $\alpha$ to be the one that intersects $\pa B_+$ closest to the overstrand of $B$. Assume $\alpha \cup \pa B_+$ does not contain a simple closed curve that is nontrivial on $S_+$. Any other intersection curve $\alpha'$ between the two arcs formed by $\alpha$ inside $\pa B_+$ must pass through $\pa B_+$ at least twice on one side of $\pa B_+$. Clearly $\alpha' \cup \pa B_+$ also does not contain a simple closed curve that is nontrivial on $S_+$, so by Lemma~\ref{lem:isotopyBubbleCleanUp}~(iii), no such $\alpha'$ exists. It follows that $\alpha$ is the innermost intersection curve inside $\pa B_+$. Then $\alpha$ corresponds to one saddle $\sigma$ in $B$. 
\\\\
Now we again use the idea of the arc $\mu\subset F$ running along the intersection curve $\alpha$ as in the proof of Lemma~\ref{lem:isotopyBubbleCleanUp} and isotope it together with $F$ towards the bubble $B$. This would allow us to find a once-punctured disk with boundary in $\sigma\cup \mu$ on $F$. If there are no other intersection curves in the region bounded by $\sigma\cup \mu$, we can pull $\mu$ towards the bubble without obstruction so that it sits over $B$.
\\\\
The only case left to consider is when there exist other intersection curves in the region bounded by $\sigma\cup \mu$. For each such curve $\beta$, since $\alpha$ is trivial, $\beta$ is also trivial. Then all intersection curves in the region bounded by $\sigma\cup \mu$ are trivial and bound disks on $F$ to at least one side. Therefore, we can always isotope $\mu$ with $F$ along these disks so that $\mu$ sits over $B$. As $\alpha$ is the innermost intersection curve in $B$, there exists a circle in $\mu\cup\sigma$ wrapping once around the overstrand of $B$, thereby bounding a meridional compression disk for $F$, a contradiction. Therefore, no intersection curve satisfying the hypotheses can intersect a bubble on both sides.
\end{proof}


\begin{lemma}
\label{lem:noMerIncSurface}
Let $L$ be a fully alternating link in $S \times I$ and let $F$ in $S\times I\setminus L$ be a sphere, a torus, or an annulus with boundaries strictly on $S\times I$. Then $F$ cannot be essential and meridionally incompressible.
\end{lemma}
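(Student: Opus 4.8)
The plan is to assemble Lemmas~\ref{lem:trivialIffTrivial}--\ref{lem:noCurveBubbleBothSides} into a contradiction. Suppose $F$ is essential and meridionally incompressible, so that we are in one of cases (1)--(3), $F$ has been isotoped so that $(s,i)$ is lexicographically minimal, and all of the preceding lemmas apply. By Lemma~\ref{lem:existsTrivialCurve} some intersection curve is trivial on $F$, and by Lemma~\ref{lem:trivialIffTrivial} it is also trivial on $S_+$ (we work on $S_+$ throughout; the argument on $S_-$ is identical). Among all curves of $F\cap S_+$ that are trivial on $S_+$, I would choose $\alpha$ to be innermost on $S_+$, so that the disk $D\subset S_+$ it bounds has no intersection curve in its interior; by Lemma~\ref{lem:isotopyBubbleCleanUp}(ii), this $\alpha$ must pass through at least one bubble.

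Next I would analyze how $\alpha$ meets the bubbles. Each maximal sub-arc of $\alpha$ lying in some $\partial B_+$ runs parallel to the overstrand of $B$ on one of its two sides, and the sub-arcs of $\alpha$ joining consecutive bubble visits lie in the flat part $S_0$. Every ``excursion'' of $\alpha$ at a bubble $B$ either returns to the same side of $B$ or passes to the other side. Lemma~\ref{lem:isotopyBubbleCleanUp}(iii) governs the same-side case, and Lemma~\ref{lem:noCurveBubbleBothSides} governs the different-sides case when $D$ contains $L\cap\partial B_+$: in either situation $\alpha\cup\partial B_+$ is forced to contain a curve that is nontrivial on $S_+$. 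But $\partial B_+$ is a disk and $\alpha$ bounds the disk $D$ with no intersection curves inside, so the only way $\alpha\cup\partial B_+$ can carry a curve nontrivial on $S_+$ is for a loop built from $\alpha$ and an arc of $\partial B_+$ to wrap once around the overstrand of $B$; such a loop bounds a meridional compression disk for $F$ exactly as in the last paragraph of the proof of Lemma~\ref{lem:noCurveBubbleBothSides}, contradicting meridional incompressibility. The one remaining configuration, a different-sides excursion at $B$ whose overstrand lies on the side of $\alpha$ away from $D$, I would eliminate by replacing $D$ with the innermost trivial disk on the other side of $\alpha$, or by pushing $D$ across $B$ onto $S_-$; this move either strictly decreases $(s,i)$ (contradicting minimality) or returns us to a case already handled.

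Running through these cases forces $\alpha$ to meet no bubble at all, contradicting Lemma~\ref{lem:isotopyBubbleCleanUp}(ii). Hence no essential, meridionally incompressible sphere, torus, or annulus (with both boundary components on $\partial(S\times I)$) can exist. I expect the main obstacle to be the combinatorial bookkeeping in the second paragraph: verifying that the constraints coming from Lemmas~\ref{lem:isotopyBubbleCleanUp}(iii) and~\ref{lem:noCurveBubbleBothSides} really do close off every way an innermost trivial disk can interact with a bubble, keeping careful track of which side of $\alpha$ each overstrand lies on, and ensuring the ``push across to $S_-$'' moves terminate without re-creating saddles. Case~(1) is the easiest, since every curve on a sphere is trivial; cases~(2) and~(3) are where the Euler characteristic count of Lemma~\ref{lem:existsTrivialCurve} and the standing hypothesis of no essential spheres are actually used.
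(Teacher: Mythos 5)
Your setup coincides with the paper's (innermost trivial curve $\alpha$ via Lemmas~\ref{lem:existsTrivialCurve} and~\ref{lem:trivialIffTrivial}, bounding a disk $D$ on $S_+$ with no intersection curves inside, meeting a bubble by Lemma~\ref{lem:isotopyBubbleCleanUp}(ii)), but the second paragraph, which you yourself flag as the obstacle, has a genuine gap in two places. First, the proposed mechanism of contradiction is wrong: a curve lying on $S_+$ cannot ``wrap once around the overstrand'' in any sense that makes it nontrivial on $S_+$, since the overstrand lies on $\partial B_+\subset S_+$ and nontriviality on $S_+$ means essential in the closed surface $S_+\cong S$. The once-punctured meridional disk appearing at the end of the proof of Lemma~\ref{lem:noCurveBubbleBothSides} is built from an arc $\mu\subset F$ together with a saddle --- a three-dimensional construction internal to that proof --- not from a loop in $\alpha\cup\partial B_+$, and no new meridional compression is used in the paper at this stage. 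The actual contradiction is simpler: in the configurations that arise, the relevant arc of $\partial B_+$ lies inside $D$, so every simple closed curve in $\alpha\cup\partial B_+$ is forced to be trivial on $S_+$, directly contradicting the conclusions of Lemma~\ref{lem:isotopyBubbleCleanUp}(iii) or Lemma~\ref{lem:noCurveBubbleBothSides}.

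Second, and more importantly, you never invoke the fully alternating hypothesis in the final assembly, and that is exactly what closes the case analysis in the paper. The paper's dichotomy is: either $D$ contains the overstrand of some bubble that $\alpha$ meets, in which case innermostness forces $\alpha$ to hit the other side of that bubble and Lemma~\ref{lem:noCurveBubbleBothSides} yields the contradiction; or $D$ contains no such overstrand, in which case the alternation parity (whenever $\alpha$ enters a region to the right of an overstrand it must exit to the left of one, Figure~\ref{fig:punchline}) forces two passes through the same side of some bubble, with the overstrand on opposite sides of $\alpha$ and the disk side of $\alpha$ between the passes, so Lemma~\ref{lem:isotopyBubbleCleanUp}(iii) yields the contradiction. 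Without this parity argument, passes whose overstrand lies away from $D$ are not excluded by Lemmas~\ref{lem:isotopyBubbleCleanUp}(iii) and~\ref{lem:noCurveBubbleBothSides} alone (those lemmas merely assert a nontrivial curve exists in $\alpha\cup\partial B_+$, which is only contradictory when the geometry pins that curve inside $D$), and your proposed repairs do not work: since $S$ has genus at least one, $\alpha$ bounds a disk to only one side, so there is no ``innermost trivial disk on the other side of $\alpha$,'' and ``pushing $D$ across $B$ onto $S_-$'' is not a defined isotopy of $F$ and comes with no argument that it decreases $(s,i)$. So the configuration you leave to ad hoc moves is precisely the one the alternating structure is needed to eliminate, and the proposal as written does not establish the lemma.
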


\begin{proof}
By Lemma~\ref{lem:existsTrivialCurve}, we know that there exists a trivial intersection curve. Consider a trivial intersection curve $\alpha$ that is innermost on $S_+$, bounding disk $D$ on $S_+$. By Lemma~\ref{lem:isotopyBubbleCleanUp}, we know that $\alpha$ intersects at least one bubble.
\\\\
As the projection is fully alternating, any time $\alpha$ enters a region through a bubble such that $\alpha$ is to the right (similarly left) of the overstrand, $\alpha$ must leave the region through a bubble such that it is to the left (similarly right) of the overstrand, as in Figure~\ref{fig:punchline}.
\\

\begin{figure}[h]
\centering
\includegraphics[scale=0.3]{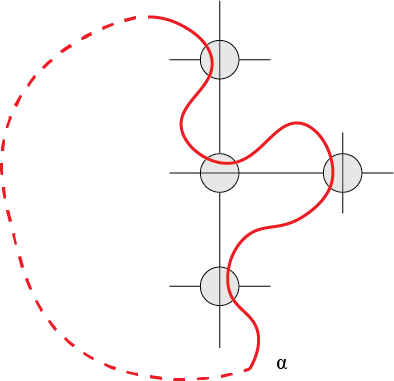}
\caption{Innermost trivial curve $\alpha$.}
\label{fig:punchline}
\end{figure}

Suppose that the disk bounded by $\alpha$ on $S_+$ contains the overstrand of a bubble that it intersects. Then because there are no other curves of intersection in $D$, $\alpha$  must hit the other side of that bubble, contradicting  Lemma~\ref{lem:noCurveBubbleBothSides}. 
\\\\
Hence the disk bounded by $\alpha$ on $S_+$ does not contain the overstrand of any bubble that it intersects.  It follows that for some bubble $B$, the curve $\alpha$ passes through one side of $B$ such that the overstrand is on the left (similarly right), and then passes through the same side of $B$ such that the overstrand is on the right (similarly left), with the disk side of $\alpha$ being between these two passes. By Lemma~\ref{lem:isotopyBubbleCleanUp}~(iii), $\pa B_+ \cup \alpha$ contains a simple closed curve that is nontrivial on $S_+$, contradicting that $\alpha$ is trivial on $S_+$. 
\\

\begin{figure}[h]
\centering
\includegraphics[scale=0.3]{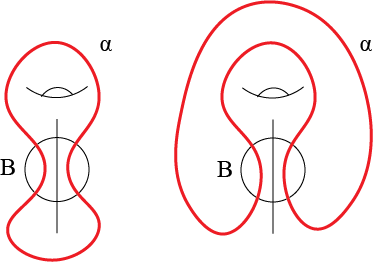}
\caption{Two cases when $\pa B_+ \cup \alpha$ contains a simple closed curve that is nontrivial on $S_+$}
\label{fig:curveBothSides}
\end{figure}

\end{proof}

Finally, in the case that $L$ is prime, we consider essential spheres and tori in $S \times I \setminus L$ that are meridionally compressible. This will eliminate all essential spheres and tori.

\begin{lemma}
\label{lem:noMerComSphereTorus}
Let $L$ be a prime, fully alternating link in $S \times I$ and let $F$ be a sphere, a torus or an annulus with boundaries strictly on $S \times I$. Then $F$ cannot be essential and meridionally compressible.
\end{lemma}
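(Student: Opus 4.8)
The plan is to suppose, toward a contradiction, that $F$ is both essential and meridionally compressible, fix a meridional compressing disk $D$ (so $D\cap F=\pa D$, $D$ is punctured once by $L$, and no isotopy rel $\pa D$ carries $D$ into $F\cup L$), perform meridional surgery on $F$ along $D$, and play the resulting surface against primeness. Throughout I will use that $S\times I$ is irreducible and that $\pa(S\times I)$ is $\pi_1$-injective.

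First I would dispose of the sphere and annulus cases, which turn out to be vacuous. Suppose $\pa D$ bounds a disk $E$ on $F$; this is automatic when $F$ is a sphere, and is one of two possibilities when $F$ is an annulus with $\pa F\subset\pa(S\times I)$. Then $E\cup D$ is a $2$-sphere meeting $L$ transversely in a single point. Since $S\times I$ is irreducible this sphere bounds a ball, hence separates, but $L$ is a closed $1$-manifold lying in the complement of $F$, so each of its components lies on one side of the sphere and meets it an even number of times -- a contradiction. When $F$ is an annulus and $\pa D$ is instead core-parallel, meridional surgery cuts $F$ into two annuli and caps each with a parallel copy of $D$, producing two once-punctured disks $E_1,E_2$ with $\pa E_i$ a single circle on one component of $\pa(S\times I)$. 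As $\pa E_i$ bounds $E_i$ in $S\times I$ it is null-homotopic there, hence null-homotopic on its boundary component by $\pi_1$-injectivity, so it bounds a disk there and we are reduced to the previous situation. Thus $F$ cannot be a meridionally compressible sphere or annulus at all.

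The heart of the argument is the torus case. The same parity obstruction shows $\pa D$ does not bound a disk on $F$, so on the torus $F$ it is essential; hence meridional surgery along $D$ -- excise an annular neighborhood $A$ of $\pa D$ and glue in two parallel copies $D_{-1},D_1$ of $D$ -- produces a $2$-sphere $\widehat F$ meeting $L$ in exactly two points, both genuine punctures by $L$. Since $L$ is prime, $\widehat F$ cannot be an essential twice-punctured sphere, so it bounds a ball $B$ with $B\cap L$ a single arc $a$ that is unknotted (boundary-parallel) in $B$ and joins the two punctures. Let $R\cong D\times[-1,1]$ be the slab between $F$ and $\widehat F$; it is a ball meeting $L$ in one vertical arc $\tau$ joining the two punctures, with $R\cap\widehat F=D_{-1}\cup D_1$ and $R\cap F=A$. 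Because $\widehat F$ separates $S\times I$, the slab $R$ lies entirely in $B$ or entirely in its complement, and I treat these sub-cases separately. If $R\subset B$, then $\tau\subset a$ forces $\tau=a$, the solid torus $V:=B\setminus\mathring R$ has $\pa V=F$, and $V\cap L=\emptyset$, so a meridian disk of $V$ is an honest compressing disk for $F$ in $S\times I\setminus L$, contradicting incompressibility of $F$. If $R$ lies in the complement of $B$, then $V:=B\cup R$ is a solid torus with $\pa V=F$ and $V\cap L=a\cup\tau=:\ell$, a single closed component of $L$; since $a$ is unknotted in $B$, $\ell$ is isotopic to the core of $V$, so $V\setminus\mathring N(\ell)\cong T^2\times I$ exhibits $F=\pa V$ as parallel to $\pa N(\ell)$ in $S\times I\setminus\mathring N(L)$, contradicting that $F$ is not boundary-parallel. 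Either way we reach a contradiction, so no essential meridionally compressible $F$ exists.

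The main obstacle is the bookkeeping in the torus case: identifying the slab $R$ precisely, pinning down how $R$, the two parallel copies of $D$, and the puncture arc sit relative to the prime ball $B$, and verifying in each sub-case that the reconstructed region $V$ is genuinely a solid torus with the asserted intersection with $L$. The crucial points are the identification $R\cap L=a$ in the first sub-case, which is exactly what turns $F$ into a compressible torus, and the use of primeness, via the unknottedness of $a$, to recognize the core of the solid torus in the second sub-case, which is what makes $F$ boundary-parallel.
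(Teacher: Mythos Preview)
Your proof is correct and follows essentially the same strategy as the paper: dispose of the sphere and annulus cases via the once-punctured-sphere parity obstruction (for the annulus, after using incompressibility of $\pa(S\times I)$ to trivialize the resulting disk boundaries), and in the torus case perform meridional surgery to obtain a twice-punctured sphere whose essentiality contradicts primeness. The paper's proof is terser---it simply asserts that the surgered sphere ``bounds a ball containing a nontrivial portion of $L$''---whereas you supply the missing justification by splitting on whether the slab $R$ lies inside or outside the prime ball $B$ and showing that an inessential $\widehat F$ would force $F$ to be compressible or boundary-parallel; this is a genuine improvement in rigor over the paper's version rather than a different approach.
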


\begin{proof}
It is clear that there are no meridionally compressible spheres. Let $F$ be an incompressible, meridionally compressible torus that is not boundary--parallel. Then a meridional compression yields a twice-punctured sphere $F'$ in $S \times I \setminus L$ that bounds a ball containing a nontrivial portion of $L$. This contradicts the assumption that $L$ is prime in $S \times I$.
\\\\
If $F$ is an essential meridionally compressible annulus with boundary in $S \times I$, apply the meridional compression. This results in two once-punctured disks, each with boundary on $S\times I$. But by incompressibility of $S$ in $S \times I$, the boundary of the disk must be trivial on $S$. Hence, we can construct a sphere punctured once by $L$, a contradiction. 
\end{proof}

\section{Eliminating essential annuli}
\label{sec:annulus}
In this section we eliminate essential annuli, which will allow us to prove Theorem~\ref{thm:main}. Let $A$ be an essential annulus in $M=S\times I\setminus \mathring{N}(L)$. As our annulus $A$ has two boundary components, exactly one of the following holds:
\begin{enumerate}
\item $A$ has boundary strictly on $\pa (S\times I)$.
\item $A$ has boundary strictly on $\pa N(L)$.
\item $A$ has boundary on both $\pa (S\times I)$ and $\pa N(L)$.
\end{enumerate}

\medskip

By Lemmas \ref{lem:noMerIncSurface} and \ref{lem:noMerComSphereTorus}, Case (1) has already been eliminated. 

\begin{lemma}
\label{lem:noAnnulusOnL}
There are no essential annuli in $M$ with both boundary components on $\pa N(L)$.
\end{lemma}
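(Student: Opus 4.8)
The plan is to run the same bubble-surgery machinery developed in Sections~\ref{sec:SphereTorus}, now applied to an annulus $A$ whose two boundary components lie on $\partial N(L)$. First I would observe that such an annulus is automatically meridionally incompressible in a useful sense: if a meridional compression of $A$ existed, the compressing disk together with a subdisk of $A$ would produce a sphere meeting $L$ in at most two points, and by primeness (or triviality of the resulting sphere) this forces $A$ to be inessential or boundary-parallel. So we may assume $A$ is incompressible and meridionally incompressible, and then cap off each boundary circle of $A$ with a meridian disk of $N(L)$ to obtain a sphere $\widehat{A}$ in $S\times I$ that meets $L$ transversely in exactly two points. The point is that $\widehat A$ is precisely the kind of surface—a twice-punctured sphere in $S\times I\setminus L$, meridionally incompressible by the above—to which Lemmas~\ref{lem:trivialIffTrivial} through \ref{lem:noMerIncSurface} apply almost verbatim, since those arguments only used that $F$ was a sphere/torus/annulus of Euler characteristic $\le 0$ after removing the punctures, together with incompressibility and meridional incompressibility.

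Concretely, I would redo the intersection-curve analysis for $F=\widehat A$ (equivalently, for $A$ itself with its boundary pushed into $\partial N(L)$): clean up $A$ relative to the bubbles so that $A\cap B$ consists of saddles, choose the embedding minimizing $(s,i)$, and verify the analogues of Lemma~\ref{lem:isotopyBubbleCleanUp}(i)–(iii) and Lemma~\ref{lem:noCurveBubbleBothSides}. The Euler-characteristic count in Lemma~\ref{lem:existsTrivialCurve} needs the mild modification that a twice-punctured sphere has $\chi=0$, so the collapsed 4-regular graph still forces a disk face and hence a trivial intersection curve—this is the step where having exactly two punctures matters, and it is why annuli with a boundary component on $\partial(S\times I)$ are handled separately in Cases (2) and (3) below rather than here. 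Then the innermost-trivial-curve argument of Lemma~\ref{lem:noMerIncSurface} produces the same contradiction: the innermost trivial curve $\alpha$ on $S_+$ must hit a bubble, cannot contain an overstrand in its disk (Lemma~\ref{lem:noCurveBubbleBothSides}), and therefore by alternation enters and exits some bubble on the same side, so Lemma~\ref{lem:isotopyBubbleCleanUp}(iii) forces a nontrivial closed curve in $\alpha\cup\partial B_+$, contradicting triviality of $\alpha$.

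The one genuinely new ingredient, and the step I expect to be the main obstacle, is dealing with the intersection curves (or arcs) of $A$ that run out to $\partial N(L)$: unlike the closed surfaces of Section~\ref{sec:SphereTorus}, $A$ has boundary, so $A\cap S_+$ can contain arcs with endpoints on $L$ in addition to closed curves. I would handle this by working with the capped sphere $\widehat A$ throughout (where all intersections are closed curves), or equivalently by noting that any such arc, being parallel to a strand of $L$ over a bubble, can be absorbed into the saddle bookkeeping; one must check that the minimality of $(s,i)$ and the innermost-disk surgeries are not obstructed by the two puncture points, which is exactly where meridional incompressibility of $\widehat A$ is used to push disks across $S_+$ without creating once-punctured spheres. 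Once that bookkeeping is in place, the contradiction is identical to Lemma~\ref{lem:noMerIncSurface}, so there is no essential annulus with both boundary components on $\partial N(L)$.
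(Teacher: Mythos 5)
The central step of your proposal --- capping off the two boundary circles of $A$ with meridian disks of $N(L)$ to obtain a twice-punctured sphere $\widehat{A}$ --- is only available when both components of $\partial A$ are meridians of $\partial N(L)$, and there is no reason for this to hold. The boundary slopes of an essential annulus with $\partial A\subset\partial N(L)$ are typically longitudinal or general $(p,q)$-curves (think of the cabling annulus of a cable component, or an annulus joining two parallel components of $L$); for such slopes no meridian-disk capping exists and no twice-punctured sphere arises. Moreover, in the one case where your construction does apply (meridional boundary slopes), primeness alone finishes immediately: the capped sphere meets $L$ twice, so it either contradicts primeness or bounds a ball containing an unknotted arc, forcing $A$ to be compressible or boundary-parallel --- so the bubble machinery is unnecessary there, and it is silent in the case that actually matters. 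Relatedly, your remark that arcs of $A\cap S_+$ with endpoints on $\partial N(L)$ ``can be absorbed into the saddle bookkeeping'' is exactly the unresolved difficulty: for non-meridional slopes these arcs are unavoidable, and they break both the Euler-characteristic count of Lemma~\ref{lem:existsTrivialCurve} (the faces of the graph on $A$ now include boundary faces) and the innermost-curve surgeries, and no argument is offered for them.

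The paper's proof avoids the intersection-curve machinery for this lemma altogether. It takes $Q$ to be a neighborhood of $A$ together with the torus or tori of $\partial N(L)$ met by $\partial A$, invokes the classification (as in Hatcher's Lemma 1.16) of $Q$ as a twice-punctured disk times $S^1$ or a circle bundle over a punctured M\"obius band, and uses the already-established absence of essential spheres and tori (Lemmas~\ref{lem:noMerIncSurface} and~\ref{lem:noMerComSphereTorus}) to conclude that $M$ is obtained from $Q$ by gluing copies of $T\times I$, hence is homeomorphic to $Q$. Counting boundary components then forces $S$ to be a torus and $L$ a single component admitting a crossing-free projection, contradicting the fact that a reduced alternating projection in $S\times I$ realizes the (positive) crossing number. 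To salvage your route you would need a genuinely new argument for non-meridional boundary slopes, which in effect means reproducing this characteristic-submanifold analysis anyway.
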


\begin{proof}

Assume that $A$ is an essential annulus with boundary strictly on $\pa N(L)$.
Define $Q$ to be the neighborhood of the union of $A$ with the torus or tori of $L$ on which the annulus has boundary components. As in the proof of Lemma 1.16 in~\cite{hatcher}, one of the following must hold:
\begin{enumerate}
\itemsep0em
\item[(a)] $A$ has boundary on two different tori, and $Q$ is the product of a twice-punctured disk with $S^1$.
\item[(b)] $A$ has both boundary components on one torus, and $Q$ is the product of a twice-punctured disk with $S^1$.
\item[(c)] $A$ has both boundary components on one torus, and $Q$ is a circle bundle over a punctured Möbius band.
\end{enumerate}
Observe that cases (a) and (b) have three torus boundaries and case (c) has two torus boundaries. We have previously shown in Lemmas~\ref{lem:noMerIncSurface} and~\ref{lem:noMerComSphereTorus} that our manifold contains no essential tori; thus the tori of $\partial Q\setminus \partial M$ must be either boundary-parallel or compressible. In all cases, we show that constructing $M$ from $Q$ considerably restricts the structure that $M$ can take, from which we derive a contradiction.
\\\\
There are up to two tori in the boundary components of $\pa Q\setminus \pa M$. If a torus $T$ of $\partial Q\setminus \partial M$ is boundary-parallel, it follows that $T$ has an external collar $T\times I$, which can be glued to $Q$ via $T$. We note this does not change $Q$ topologically.
\\\\
If one of the tori $T$ of $\pa Q\setminus \pa M$ is compressible, consider a compression disk $D$ for $T$. A compression of $T$ along $D$ yields a sphere $R$, and as we have shown that $M$ does not contain essential spheres by Lemma~\ref{lem:noMerIncSurface}, the sphere $R$ must bound a ball $B$. Therefore, if we resurger $R$ to get $T$ back, $B$ becomes a solid torus. 
\\\\
Hence $M$ comes from $Q$ by gluing either a $T\times I$ or a solid torus to each component of $\pa Q\setminus \pa M$. However, gluing a solid torus to any boundary component of $Q$ lowers its number of boundary components to less than three. Note $M$ must have at least three boundary components: two from $S\times I$, plus one for each component of $L$. Therefore, only copies of $T\times I$ are glued onto $Q$, so $M$ is homeomorphic to $Q$.
\\\\
Now, because $M$ as constructed from $Q$ has only torus boundary components, the only possibility is for $S$ to have genus $g=1$. Note that in case (c), the entire manifold only has two boundary components, thereby eliminating case (c) from consideration.
\\\\
We observe that cases (a) and (b) are equivalent up to homeomorphism. Thus $M$ is a twice-punctured disk crossed with $S^1$: a manifold with three boundary components. These three boundary components correspond to the boundaries of $S\times I\setminus \mathring{N}(L)$, implying the link $L$ has only one component, corresponding to one of the punctures crossed with $S^1$. Observe that $L$ admits a projection without crossings onto either of the boundaries of $S\times I$. Thus the crossing number of $L$ in $S\times I$ is $0$. However, the crossing number $c(L)$ of an alternating knot on a given surface $S$ in $S\times I$ is given by its reduced alternating projection (see~\cite{crossing number}), which is assumed to have at least one crossing, a contradiction.
\end{proof}

\begin{lemma}
\label{lem:noAnnulusOnLS}
There are no essential annuli in $M$ with one boundary component on $\pa N(L)$ and one boundary component on $\partial (S\times I)$.
\end{lemma}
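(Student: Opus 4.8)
The plan is to argue by contradiction, supposing $A$ is an essential annulus with one boundary component $c_1$ on $\partial N(L)$ and one boundary component $c_2$ on $\partial(S\times I)$. The first step is to analyze the curve $c_1$ on the torus (or torus component) of $\partial N(L)$: since $A$ is incompressible, $c_1$ is not a meridian of $L$ (otherwise $A$ would be a meridional compressing disk, not an annulus, after capping); nor can $c_1$ be trivial on the torus, since then $A$ would either be compressible or boundary-parallel into $\partial N(L)$. So $c_1$ is an essential non-meridional curve, i.e.\ it runs some nonzero number $p\geq 1$ of times in the longitudinal direction. Meanwhile $c_2$ is a simple closed curve on $S\times\{0\}$ or $S\times\{1\}$; by incompressibility of $S$ in $S\times I$, if $c_2$ bounds a disk on $S$ we can work toward a contradiction with the structure near $\partial N(L)$, and if $c_2$ is essential on $S$ it carries nontrivial homology.

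Next I would apply the same bubble/saddle machinery developed in Section~\ref{sec:SphereTorus}: push $A$ radially away from the vertical axis of each bubble so that $A$ meets each bubble in saddles, look at the intersection curves and arcs $A\cap S_\pm$, and choose $A$ to minimize the pair $(s,i)$. The arcs of $A\cap S_+$ now include arcs running from a bubble to $\partial N(L)$ (since $c_1\subset\partial N(L)$ and $A$ is forced to meet bubbles in order to reach the boundary of $S\times I$ — an analogue of Lemma~\ref{lem:isotopyBubbleCleanUp}(ii) should be established here using primeness and full alternation). The key geometric input is that near $\partial N(L)$, the link strand passes over the top or under the bottom of each incident bubble, so an innermost subarc of $A\cap S_+$ cut off against $\partial N(L)$, together with a piece of $\partial B_+$, behaves exactly as in Lemmas~\ref{lem:noCurveBubbleBothSides} and~\ref{lem:noMerIncSurface}: full alternation forces the arc to enter and leave on opposite sides of the overstrand relative to which side of $\alpha$ the disk piece lies, and this eventually produces either a meridional compression disk or a nontrivial closed curve in $\partial B_+\cup\alpha$ that cannot be trivial — contradicting the triviality one extracts from an Euler characteristic count on $A$ (which, like the torus and annulus in Lemma~\ref{lem:existsTrivialCurve}, has $\chi(A)=0$).

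The alternative route, which may be cleaner, is to reduce to the annulus already handled: take $A$ with $\partial A = c_1\cup c_2$ and form the manifold $M$ cut along $A$, or form $Q = N(A\cup \partial N(L))$ as in Lemma~\ref{lem:noAnnulusOnL}. Since $c_1$ is essential and non-meridional on $\partial N(L)$, the annulus $A$ together with an annular collar in $\partial N(L)$ and a meridian disk structure should let one build, after surgering $L$ along the Dehn filling induced by $c_1$, a new link $L'$ with fewer components (or a knot) in $S\times I$ whose complement contains an essential annulus with both ends on $\partial(S\times I)$ — which is Case~(1), already eliminated by Lemmas~\ref{lem:noMerIncSurface} and~\ref{lem:noMerComSphereTorus} — unless the surgery/filling changes the manifold in a controlled way, in which case one again runs into a contradiction with the crossing-number fact (see~\cite{crossing number}) used at the end of Lemma~\ref{lem:noAnnulusOnL}.

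I expect the main obstacle to be establishing the bubble-hitting property (the analogue of Lemma~\ref{lem:isotopyBubbleCleanUp}(ii)) for an arc of $A\cap S_+$ with one endpoint on $\partial N(L)$: one must rule out the possibility that such an arc, or an innermost closed curve it cuts off, lies entirely in a single complementary disk region of the projection without meeting any bubble, and the argument must use both primeness (to control twice-punctured spheres) and full alternation (to force connectivity and the over/under alternation at bubbles). Once that is in place, the contradiction follows by the now-standard innermost-disk and meridional-compression arguments of Section~\ref{sec:SphereTorus} applied verbatim.
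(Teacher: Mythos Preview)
Your proposal sketches two approaches but does not complete either, and neither matches the paper's argument. The first route (running the bubble/saddle machinery directly on $A$) has a genuine difficulty you underestimate: because one boundary component $c_2$ of $A$ lies on $\partial(S\times I)$, the intersection $A\cap S_+$ consists of \emph{arcs} as well as closed curves, and the entire framework of Section~\ref{sec:SphereTorus} (Lemmas~\ref{lem:trivialIffTrivial}--\ref{lem:noMerIncSurface}) is built for closed intersection curves. The innermost-disk and Euler-characteristic arguments do not transfer without substantial new work, and you have not supplied it. Your second route is too vague to evaluate: ``should let one build\dots unless\dots in which case one again runs into a contradiction'' is not an argument.

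The paper's proof avoids these issues entirely by \emph{doubling}. One reflects $M$ across its non-torus boundary components to obtain $M' = (S\times S^1)\setminus(\mathring N(L)\cup\mathring N(L^R))$; the annulus $A$ and its mirror glue along $c_2$ to give an essential annulus $A'$ in $M'$ with both boundary components on link tori, which is exactly the situation of Lemma~\ref{lem:noAnnulusOnL}. After checking that $M'$ still has no essential spheres or tori, one counts boundary components: if $L$ has $k\ge 2$ components then $M'$ has $2k\ge 4$ torus boundaries, contradicting the three-boundary bound from the Seifert-piece analysis. If $L$ is a knot, the outer torus of $Q=N(A'\cup\partial N(L)\cup\partial N(L^R))$ cannot be boundary-parallel (parity of boundary components of a double) and so is compressible, forcing $M'$ to be a solid torus minus a $(p,q)$-curve; one then computes $H_1$ of this to be $\mathbb{Z}^2$, while $M'$ contains the incompressible genus-$g$ surface $S$ with $g\ge 2$, giving a $\mathbb{Z}^{2g}$ in homology --- a contradiction. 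The doubling trick is the key idea you are missing.
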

\begin{proof}

Assume that $A$ is an essential annulus with one boundary component on a torus of $\partial N(L)$ and one boundary component on $\partial(S\times I)$. A similar proof to that of Lemma~\ref{lem:noAnnulusOnL} shows that $S$ has genus at least 2.
\\\\
Reflect $M$ through its non-torus boundaries to get a second copy $M^R$ of $M$, and let $M'$ be the double of $M$ given by $M \cup_{\pa M}M^R$ with corresponding non-torus boundary components identified. We note that $M'= (S\times S^1)\setminus (\mathring{N}(L)\cup\mathring{N}(L^R))$ and that $\pa M'$ consists of the boundaries of the neighborhoods of $L$ and $L^R$. Note $A$ and its corresponding annulus $A^R$ are now glued along one of their boundaries to form $A' = A \cup A^R$, an essential annulus in $M'$ with its two boundary components on $\pa N(L)$ and $\pa N(L^R)$, respectively.
\\\\
We first note that we can extend the fact that there are no essential spheres or tori in $S \times I \setminus L$ to $M'$.
For any such sphere or torus could not be entirely contained in either $M$ or $M^R$ by Lemmas \ref{lem:noMerIncSurface} and \ref{lem:noMerComSphereTorus}. By the incompressibility of $\pa S \times I$, this eliminates spheres and any such torus must intersect $M$ and $M^R$ in essential annuli with their boundaries on $\pa S \times I$. But these were also eliminated in Lemmas \ref{lem:noMerIncSurface} and \ref{lem:noMerComSphereTorus}.
\\\\
Now, we prove the lemma when $L$ consists of $k$ components, $k\geq 2$. We observe that our new manifold $M'$ has $2k\geq 4$ boundary components. We have already shown that a manifold with no essential tori, which contains an essential annulus meeting only torus components of the boundary, has at most three boundary components, a contradiction.
\\\\
Now let $L$ have one component. Then the annulus $A'$ has boundary on $\pa N(L)$ and $\pa N(L^R)$, which corresponds to case (a) of the proof of Lemma~\ref{lem:noAnnulusOnL}. Let $Q$ be the neighborhood of the annulus $A'$ and the tori $\pa N(L)$ and $\pa N(L^R)$. Then the outer torus of $Q$ cannot be essential, so it must be either compressible or boundary-parallel. Note that if it is boundary-parallel, the entire manifold $M'$ must have 3 boundary components. But $M'$ was obtained by doubling $M$, and hence must have an even number of boundary components, a contradiction.
\\\\
If the outer torus of $Q$ is compressible, then $M$ is obtained by gluing a solid torus to the outside of $Q$ as in the proof of Lemma~\ref{lem:noAnnulusOnL}. We know that $Q$ is a twice-punctured disk crossed with $S^1$, which we can think of as $T \times I$ minus a nontrivial simple closed curve lying in $T\times\{\frac{1}{2}\}$. Gluing a solid torus to $T \times I$ along one of its boundaries gives us a solid torus, so the result is equivalent to a solid torus with a neighborhood of a $(p,q)$-curve removed, denoted $V_{p,q}$.
\\

We wish to distinguish between $V_{p,q}$ and $M'$ to derive a contradiction. 
For $V_{p,q}$, we first find the fundamental group. Cutting $V_{p,q}$ open along an annulus with both boundaries on the neighborhood of the $(p,q)$-curve, we get a solid torus with generator $\gamma$ and a $T\times I$ with generators $\alpha$ and $\beta$. We notice that $\gamma$ wraps $q$ times around the solid torus inside it, as in the figure below.
\\

\begin{figure}[h]
\centering
\includegraphics[width=4.5cm, height = 4.125cm]{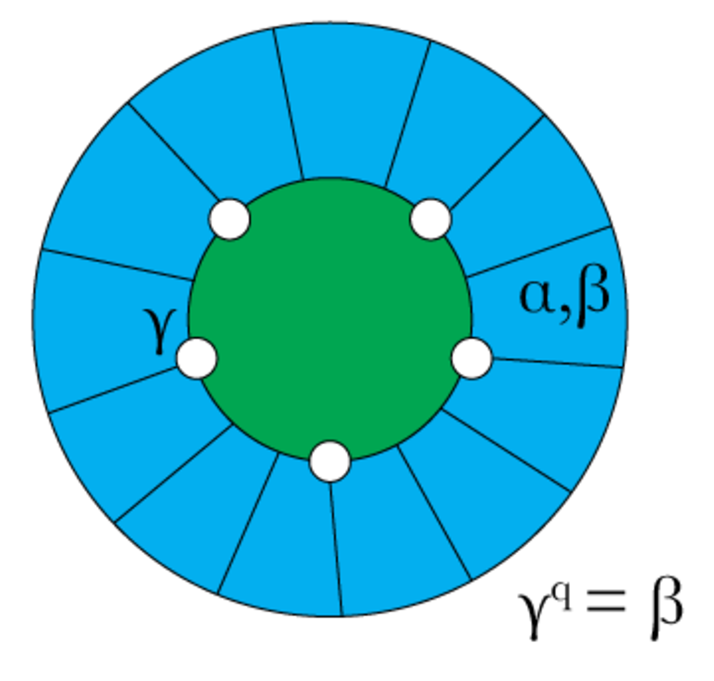}
\caption{The cross-section of a solid torus minus the $(p,q)$-curve. Cut along the annulus along the $(p,q)$-curve and we get a solid torus glued to $T\times I$ along the $(p,q)$-annulus.}
\label{fig:pqthing}
\end{figure}

Then, $\pi_1(V_{p,q})=\langle \alpha,\beta,\gamma|\gamma^q=\alpha,\alpha\beta=\beta\alpha\rangle$ and $H_1(V_{p,q})=\mathbb{Z}^2$. On the other hand, we certainly know that $M'$ contains a copy of $S$ as an incompressible genus $g$ surface because $S\times S^1$ does. This implies that $H_1(M')$ contains a $\mathbb{Z}^{2g}$ subgroup, and since $g\geq 2$, we have a contradiction.
\end{proof}

Thus our manifold can contain no essential annuli. Finally, we prove Theorem~\ref{thm:main}, i.e., a prime, fully alternating link in $S\times I$ is hyperbolic. 

\begin{proof}[Proof of Theorem~\ref{thm:main}] We check the conditions in Thurston's Hyperbolization Theorem~\cite{thurston}. By Lemmas~\ref{lem:noMerIncSurface} and~\ref{lem:noMerComSphereTorus}, there are no essential spheres or tori in $S\times I\setminus \mathring{N}(L)$. By Lemmas~\ref{lem:noMerIncSurface},~\ref{lem:noMerComSphereTorus},~\ref{lem:noAnnulusOnL} and~\ref{lem:noAnnulusOnLS}, there are no essential annuli in $S\times I\setminus \mathring{N}(L)$. Therefore, a prime fully alternating link $L$ in $S\times I$ is hyperbolic.
\end{proof}

\section{Primeness of alternating links in $S \times I$}
\label{sec:prime}

In this section we prove Theorem~\ref{thm:primeiffobv}, i.e. a link in $S\times I$ with a reduced fully alternating projection on $S$ is prime if and only if the projection is obviously prime. \\

Suppose a reduced, fully alternating projection $P$ of $L$ on $S$ is not obviously prime. Then we show that $L$ is not prime in $S\times I$.  By definition, we can find a twice-punctured circle in $P$, which bounds a disk containing at least one crossing of $P$. A neighborhood of the disk is a ball $W$ in $S\times I$ contining an alternating portion of $L$. Since the least number of crossings of an alternating link occurs in any reduced alternating projection (\cite{kauffman},\cite{murasugi},\cite{thistlethwaite}), $W$ contains a nontrivial portion of $L$.
Since the boundaries of $S \times I$ are outside $W$, we know that $\pa W$ must be essential, which shows that $L$ is not prime.
\\\\
Suppose now that $L$ is a non-prime, fully alternating link in $S\times I$ with a reduced fully alternating projection $P$ on $S$. We want to show that $L$ is not obviously prime in $P$. Let $F$ be an essential twice-punctured sphere in $S \times I \setminus L$. We can assume that $F$ is meridionally incompressible, since otherwise a meridional compression would generate two twice-punctured spheres, and we can iterate this process until we obtain an essential, meridionally incompressible twice-punctured sphere.
\\\\
As in Section~\ref{sec:SphereTorus}, we project $L$ onto $S$, place bubbles at each crossing, and define the surfaces $S_+$, $S_-$ and $S_0$. We consider the intersection curves $F \cap S_+$ as before, although now we allow two places where intersection curves cross the link, created by the punctures.

\begin{lemma}
\label{lem:primeLemmas}
Let $F$ be an essential, meridionally incompressible twice-punctured sphere in $S \times I \setminus L$. Then there exists an isotopy of $F$ such that the following are true:
\begin{enumerate}[label=(\roman*)]
\item Every intersection curve of $F \cap S_+$ is trivial on both $F$ and $S_+$.
\item Every intersection curve in $F \cap S_+$ intersects a bubble or the link $L$ at least once.
\item There are no easily removable intersections of $F$ with bubbles as in Figure~\ref{fig:easilyRemovable}.
\end{enumerate}
Further, let $\alpha$ be an arc of an intersection curve in $F \cap S_+$, and let $B$ be a bubble that $\alpha$ passes through, such that $\pa B_+ \cup \alpha$ does not contain a simple closed curve that is nontrivial on $S_+$.
\begin{enumerate}[resume,label=(\roman*)]
\item For any such pair $B$ and $\alpha$, the arc $\alpha$ does not pass through the same side of $\pa B$ more than once.
\item For any such pair $B$ and $\alpha$, the arc $\alpha$ does not pass through both sides of $\pa B$.
\end{enumerate}
\end{lemma}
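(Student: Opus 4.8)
The plan is to rerun the Menasco-style double-minimization argument of Section~\ref{sec:SphereTorus}, now with the twice-punctured sphere $F$ in place of the closed surface there, adapting Lemmas~\ref{lem:trivialIffTrivial}, \ref{lem:isotopyBubbleCleanUp}, and~\ref{lem:noCurveBubbleBothSides} to the punctured setting. Among all embeddings of $F$ isotopic in $S\times I\setminus L$ to the given one I would fix one minimizing the pair $(s,i)$ lexicographically, where $s$ counts saddles in the bubbles and $i$ counts intersection curves of $F\cap S_+$; each clause of the lemma is then proved by showing that a violation either allows a further decrease of $(s,i)$, or produces an essential sphere or torus (impossible by Lemmas~\ref{lem:noMerIncSurface} and~\ref{lem:noMerComSphereTorus}), or produces a meridional compression of $F$ (impossible by hypothesis). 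Throughout I would use that a twice-punctured sphere is an annulus, so a simple closed curve of $F\cap S_+$ is either trivial on $F$ or a core curve bounding a once-punctured disk of $F$ on each side, while on $S_+$ it is trivial or not, and a disk it bounds on $S_+$ may meet $L$ in overstrand arcs and so be punctured.

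For (i) I would first discard a curve trivial on $F$ but nontrivial on $S_+$ exactly as in the first half of Lemma~\ref{lem:trivialIffTrivial}, using incompressibility of $S_+$ in $S\times I$. For a curve $\gamma$ essential on $F$ but trivial on $S_+$, take it innermost on $S_+$ with disk $D_2\subset S_+$; since $\gamma$ is a core curve it bounds a once-punctured disk $D_1$ on $F$, and capping $D_1$ with $D_2$ gives a sphere meeting $L$ in an odd number of points, so by parity $D_2$ carries at least one puncture. Clearing the trivial-on-$F$ curves inside $D_2$ by innermost isotopies reduces to $D_2$ punctured exactly once; then $\gamma$ bounds a once-punctured disk in $S\times I$, and meridional incompressibility of $F$ lets one isotope $D_2$ onto $F\cup L$, lowering $i$. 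A curve trivial on $S_+$ but essential on $F$ is excluded by the same capping-against-essential-spheres argument as in the second half of Lemma~\ref{lem:trivialIffTrivial}. Part (ii) is the analogue of Lemma~\ref{lem:isotopyBubbleCleanUp}(i)--(ii): a curve meeting no bubble and disjoint from $L$ is either trivial and removable (dropping $i$) or nontrivial on $S_0$, hence --- the projection being fully alternating, so every complementary region is a disk --- forced through a bubble.

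Parts (iv) and (v) are the punctured analogues of Lemma~\ref{lem:isotopyBubbleCleanUp}(iii) and Lemma~\ref{lem:noCurveBubbleBothSides}. For (iv), an arc $\alpha$ entering the same side of $\pa B$ twice, with $\alpha\cup\pa B_+$ bounding no nontrivial curve, corresponds to two nested saddles; taking the innermost such pair and pulling a band of $F$ along $\alpha$ into $B$ as in Figure~\ref{fig:bubbleSameSideTwice}, one pushes the band and both saddles out the far side of $B$, dropping $s$ --- provided the band meets no puncture, which is precisely what (iii) has been set up to guarantee. For (v), an arc $\alpha$ through both sides of $\pa B$ with $\alpha\cup\pa B_+$ trivial is chosen closest to the overstrand; by (iv) it is innermost in $\pa B_+$ and carries a single saddle $\sigma$; pulling the arc of $F$ along $\alpha$ into $B$, and using that by (i) all intervening intersection curves are trivial and bound disks on $F$, one ends with a loop wrapping the overstrand of $B$ once, i.e. a meridional compression disk for $F$, contradicting meridional incompressibility. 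Part (iii) itself is the cleanup that removes the easily removable saddle configurations of Figure~\ref{fig:easilyRemovable} by a direct isotopy across the offending bubble, again lowering $s$.

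The step I expect to be the main obstacle is the bookkeeping around the two punctures in (iii)--(v): unlike in Section~\ref{sec:SphereTorus}, an arc of $F\cap S_+$ may genuinely pass through $L$, so the bands and disks produced by the Adams-style isotopies must be checked to avoid $\partial N(L)$, and the parity argument in (i) that pins down the number of punctures on a capping disk must be made to interact correctly with the innermost-curve reductions. Once (iii) is arranged so that no remaining saddle or subdisk can be pushed across a bubble toward a puncture, the arguments of (iv)--(v) transcribe those of Section~\ref{sec:SphereTorus} essentially verbatim, and establishing this compatibility is the heart of the lemma.
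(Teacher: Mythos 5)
Your treatment of parts (ii)--(v) matches the paper's proof: the same lexicographic minimization of $(s,i)$, the same transcriptions of Lemma~\ref{lem:isotopyBubbleCleanUp}(iii) and Lemma~\ref{lem:noCurveBubbleBothSides}, and the same use of part (iii) to guarantee that the once-punctured disk produced in (v) is an honest meridional compression (the paper invokes (iii) at exactly that point, rather than in (iv) as you do, but that is bookkeeping, not substance).

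Part (i), however, has a genuine gap. The statement requires every intersection curve to be trivial on both $F$ and $S_+$, so you must exclude three kinds of curves: trivial on $F$ but nontrivial on $S_+$; nontrivial on $F$ (i.e.\ separating the two punctures) but trivial on $S_+$; and nontrivial on both. You handle the first via the first half of Lemma~\ref{lem:trivialIffTrivial}, and you address the second twice, inconsistently (once by a parity-plus-meridional-incompressibility removal, once by ``the second half of Lemma~\ref{lem:trivialIffTrivial}''), but the third case --- a core curve of the twice-punctured sphere that is also nontrivial on $S_+$ --- is never addressed, and none of your arguments applies to it: the compression-of-$S_+$ argument needs a disk on $F$, which a core curve does not bound, and your parity argument needs the curve to be trivial on $S_+$. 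The paper disposes of all nontrivial-on-$S_+$ curves at once with a small trick you are missing: since $F$ is a twice-punctured sphere, every intersection curve bounds on $F$ either a disk or a once-punctured disk; filling $L$ back into $S\times I$, the closure of that (once-punctured) disk is an honest disk, so a curve nontrivial on $S_+$ would compress $S_+$ in $S\times I$, contradicting incompressibility of $S_+$. This also lets the paper avoid your problematic removal step: clearing intersection curves inside $D_2$ does not change how many times $L$ punctures $D_2$, so if $D_2$ is punctured three or more times (parity only gives ``odd''), meridional incompressibility --- which concerns once-punctured disks only --- does not apply, and the claimed reduction of $i$ is unjustified.
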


\begin{figure}[h]
\includegraphics[scale=0.3]{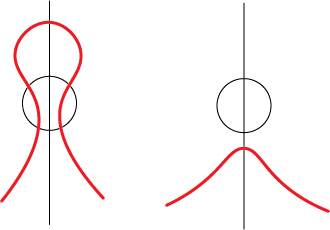}
\centering
\caption{Easily removable intersection of $F$ and a bubble.}
\label{fig:easilyRemovable}
\end{figure}

\begin{proof}
\text{} 
\begin{enumerate}[label=(\roman*), wide]
\item By the argument from the proof of Lemma~\ref{lem:trivialIffTrivial}, observe that an intersection curve is trivial on $F$ if and only if it is also trivial on $S_+$, where we define a curve to be nontrivial on $F$ if it separates the two punctures. Note that the argument for cases (2) and (3) applies because there are no essential spheres in $S \times I \setminus L$ by Lemma~\ref{lem:noMerIncSurface}.
\\\\
Moreover, we claim that any intersection curve on $F\cap S_+$ is trivial on $S_+$. Suppose some intersection curve $\alpha$ on $F\cap S_+$ is nontrivial on $S_+$. Since $F$ is a twice-punctured sphere, $\alpha$ bounds either a disk or a once-punctured disk on $F$ to one side. Fill $L$ back into $S\times I\setminus L$ and consider the closure $D$ of the disk or the once-punctured disk bounded by $\alpha$. Then $D$ is a compression disk of $S_+$ in $S\times I$, contradicting that $S_+$ is incompressible in $S\times I$. Thus all intersection curves are trivial on $S_+$. Note that this argument also applies to intersection curves $F \cap S'$, where $S'$ is defined to be $S_0$ where the equatorial disk in each bubble replaced by any combination of upper and lower hemispheres.
\\
\item As in the proof of Lemma~\ref{lem:isotopyBubbleCleanUp}, we associate an ordered pair $(s,i)$ to each embedding of $F$ prior to isotopy, where $s$ is the number of saddles in $F$ and $i$ is the number of intersection curves in $F \cap S_+$. Suppose we pick $F$ to be the embedding such that its ordered pair $(s,i)$ is minimal under lexicographical ordering. Then it follows from the same argument that we can remove intersection curves that do not hit any bubbles and are not punctured by a link component.
\\
\item We can assume that there are no easily removable intersections of $F$ with bubbles as in Figure~\ref{fig:easilyRemovable}, for otherwise we could isotope $F$ to eliminate a saddle.
\\
\item Again it follows from the same argument that for any pair $B$ and $\alpha$, if $\pa B_+ \cup \alpha$ does not contain a simple closed curve that is nontrivial on $S_+$, then $\alpha$ does not intersect the same side of $\pa B$ more than once.
\\
\item As in the proof of Lemma~\ref{lem:noCurveBubbleBothSides},  for any such pair $B$ and $\alpha$, if $\pa B_+ \cup \alpha$ does not contain a simple closed curve that is nontrivial on $S_+$, we get a once-punctured disk $D$ with $D \cap F = \pa D$. Since we have eliminated easily removable intersections of $F$ with the bubbles as in Figure~\ref{fig:easilyRemovable}, any such once-punctured disk $D$ must be a meridional compression, a contradiction. Therefore $\alpha$ does not intersect both sides of $\pa B$.
\end{enumerate}
\end{proof}

Now we consider each intersection curve $\alpha$ of $F \cap S_+$. Note that $\alpha$ may hit bubbles and it can hit $L$ at most twice (which correspond to punctures of $F$).

\begin{lemma}
\label{lem:atLeastTwice}
Every intersection curve of $F \cap S_+$ must intersect $L$ at least twice.
\end{lemma}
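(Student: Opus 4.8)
The plan is to argue by contradiction, using the minimality of $(s,i)$, the absence of essential spheres in $S\times I\setminus L$ (Lemma~\ref{lem:noMerIncSurface}), and ultimately a reprise of the endgame in the proof of Lemma~\ref{lem:noMerIncSurface}. Suppose some intersection curve $\alpha$ of $F\cap S_+$ meets $L$ fewer than twice. By Lemma~\ref{lem:primeLemmas}(i), $\alpha$ bounds a disk on $S_+$; since each component of $L$ is a closed curve, a standard parity argument shows $\alpha$ meets $L$ an even number of times, hence $\alpha$ is in fact disjoint from $L$, and by Lemma~\ref{lem:primeLemmas}(ii) it meets at least one bubble. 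It remains to contradict the existence of such an $\alpha$.

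Because the two punctures of $F$ appear as transverse points of $L$ on $S_+$ and the curves of $F\cap S_+$ are pairwise disjoint, exactly one intersection curve passes through a puncture, and it passes through both; call it $\gamma$. Suppose first that we may choose $\alpha$, still disjoint from $L$, so that the disk $D$ it bounds on $S_+$ contains no other intersection curve — in particular no puncture and not $\gamma$. (If this is impossible, then an innermost intersection curve disjoint from $L$ bounds a disk whose interior meets $F\cap S_+$ only in $\gamma$, with $\gamma$ and both punctures strictly inside; I treat this configuration at the end, as it is the main obstacle.) With $D$ clean in this sense, $\alpha$ is in exactly the situation analyzed at the close of the proof of Lemma~\ref{lem:noMerIncSurface}, with parts (iv) and (v) of Lemma~\ref{lem:primeLemmas} replacing Lemma~\ref{lem:isotopyBubbleCleanUp}(iii) and Lemma~\ref{lem:noCurveBubbleBothSides}.

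Running that argument: as $\alpha$ is disjoint from $L$, tracing it around the diagram and using full alternation forces the side of the overstrand along which $\alpha$ passes through consecutive bubbles to alternate. Hence either $D$ contains the overstrand of some bubble $B$ met by $\alpha$, which — as nothing else lies in $D$ — forces $\alpha$ to pass through both sides of $B$, contradicting Lemma~\ref{lem:primeLemmas}(v); or $D$ contains no such overstrand, and then $\alpha$ runs through the same side of some bubble twice with $D$ between the two passes, so Lemma~\ref{lem:primeLemmas}(iv) makes $\pa B_+\cup\alpha$ carry a simple closed curve nontrivial on $S_+$ that $D$ simultaneously forces to be trivial. Either way we reach a contradiction, so no such $\alpha$ exists and every intersection curve of $F\cap S_+$ meets $L$ at least twice.

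I expect the main obstacle to be the deferred configuration, in which the innermost curve $\alpha$ disjoint from $L$ surrounds $\gamma$ strictly, with a clean annulus between them, so that ``nothing else lies in $D$'' fails. I would dispose of it by first showing $\gamma$ meets a bubble — otherwise $\gamma$ together with a crossing-free arc of $L$ bounds a disk on $S_+$, and sliding a puncture of $F$ along that arc exhibits $F$ as bounding a ball with a trivial arc, contradicting that $F$ is essential — and then surgering $F$: replace the twice-punctured subdisk of $F$ that $\alpha$ cuts off, which carries all of the (at least one) saddles met by $\gamma$, by the subdisk $D$ of $S_+$, which carries no saddles. The result is a twice-punctured sphere with strictly fewer saddles; passing to an essential, meridionally incompressible representative by meridional compressions, as in the reduction to such a surface made earlier in this section, contradicts the minimality of $(s,i)$. (Alternatively, one first normalizes $F$ so that each puncture lies on $S_0$, hence on an intersection curve, and so cannot lie interior to an innermost disk.) Beyond this, the proof is a reprise of Lemmas~\ref{lem:primeLemmas} and~\ref{lem:noMerIncSurface}.
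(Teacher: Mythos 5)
Your reduction to the case $\alpha\cap L=\emptyset$ rests on a parity claim that is false in this setting, and the cases it discards are exactly the ones that carry the weight of the lemma. On $S_+$ the link is not a union of closed curves: at every bubble the understrand leaves $S_+$ by diving below the upper hemisphere, so an intersection curve $\alpha$ bounding a disk $D$ on $S_+$ can meet $L$ an odd number of times, with the parity defect absorbed by passes of $\alpha$ through bubbles. The correct parity statement, which the paper derives from full alternation at the start of its proof, is that the number of intersections of $\alpha$ with $L$ \emph{plus} the number of passes of $\alpha$ through bubbles is even; $|\alpha\cap L|$ alone need not be. Hence a curve meeting $L$ exactly once while crossing bubbles an odd number of times is not excluded by your argument, and ruling out precisely these configurations is the heart of the paper's proof: the case of a single bubble crossed once (handled via the arc of $F\cap S_-$ running through both sides of that bubble and Lemma~\ref{lem:primeLemmas}(v), which forces $\alpha$ to be nontrivial on $S_+$, a contradiction), and the case of two bubble passes not separated by a puncture, handled by looking at the arcs of $F\cap S_-$ that enter $D$ past the overstrands contained in $D$, taking an outermost such arc, and contradicting Lemma~\ref{lem:primeLemmas}(iv) or (v). Nothing in your write-up recovers these cases; your clean-disk argument (which does parallel the endgame of Lemma~\ref{lem:noMerIncSurface}) only applies once $\alpha$ is already known to avoid $L$.

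Two further problems. Your assertion that exactly one intersection curve passes through the punctures and carries both of them presupposes the lemma: disjointness of the curves of $F\cap S_+$ does not prevent two different curves from each carrying one puncture, and the statement that a single curve carries both is a consequence of Lemma~\ref{lem:atLeastTwice}, not an available input. Also, the surgery in your deferred configuration is not well defined: the disk $D\subset S_+$ that you propose to graft onto the unpunctured side of $F$ contains $\gamma$, both punctures, arcs of $L$, and at least one bubble, so $D$ is not a subsurface of $S\times I\setminus L$ and cannot simply replace a subdisk of $F$; pushing it off of $L$ past the crossings it contains is exactly the kind of move that needs justification and in general changes the puncture count. The reliable route here is the paper's case analysis of the curves of $F\cap S_-$ inside $D$.
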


\begin{proof}
First observe that in a fully alternating projection, if $\alpha$ intersects $L$ it enters an adjacent region. If $\alpha$ entered the initial region through a bubble with the overstrand on the right (respectively left), it follows that when $\alpha$ leaves this adjacent region through a bubble, the overstrand is again on the right (respectively left). Hence the sum of the number of intersections of $\alpha$ with $L$ and of $\alpha$ with bubbles must be even.
\\\\
Suppose $\alpha$ does not hit any bubbles. Then $\alpha$ must intersect $L$, since we eliminated intersection curves that do not hit bubbles or $L$. So, $\alpha$ must intersect $L$ an even number of times, which implies $\alpha$ must intersect $L$ at least twice.
\\\\
Now suppose $\alpha$ only intersects a single bubble $B$ and does so once. Then on $S_-$, there is an arc $\beta$ of an intersection curve in $F \cap S_+$ that hits both sides of $\pa B$. By Lemma~\ref{lem:primeLemmas}~(v), $\pa B_+ \cup \beta$ must contain a nontrivial curve on $S_-$ (see Figure~\ref{fig:bubbleonce}(B)). Then $\alpha$ must have been a nontrivial curve on $S_+$, which contradicts that all intersection curves are trivial on $S_+$.
\\

\begin{figure}[h]
\centering
\begin{subfigure}{0.45\textwidth}
\centering
\includegraphics[scale=0.3]{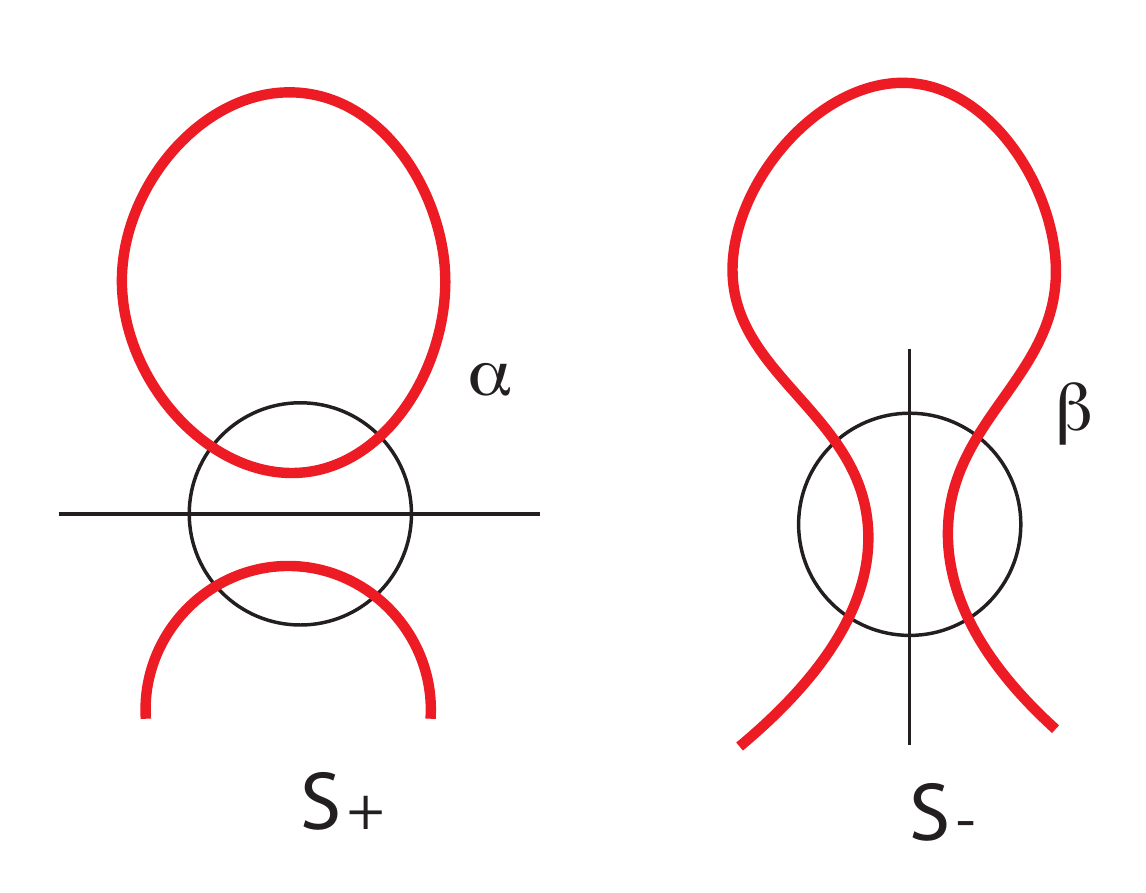}
\caption{Intersection curve in $F \cap S_-$ which passes through both sides of a bubble $B$.}
\label{fig:primeDiagram1}
\end{subfigure}
\begin{subfigure}{0.45\textwidth}
\centering
\includegraphics[scale=0.3]{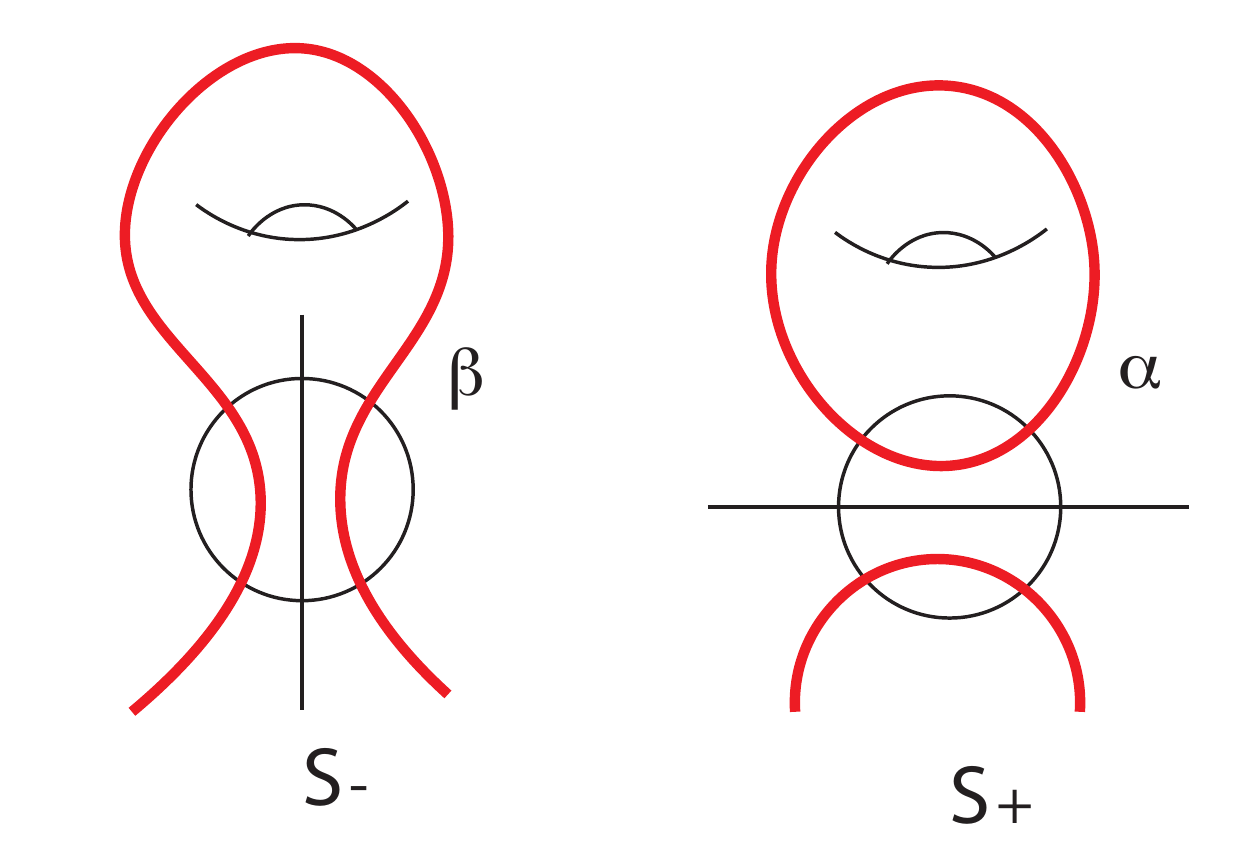}
\caption{If $\pa B_+ \cup \beta$ contains a nontrivial curve, $\alpha$ must have been nontrivial.}
\label{fig:primeDiagram1b}
\end{subfigure}
\caption{When $\alpha$ hits only one bubble once.}
\label{fig:bubbleonce}
\end{figure}

Finally, suppose $\alpha$ intersects bubbles more than once, such that between any two bubbles intersected by $\alpha$, there exists an arc of $\alpha$ that connects the two bubbles without any punctures in between. Let $D$ be the disk bounded by $\alpha$ on $S_+$. Since $L$ is alternating, $D$ contains the overstrand of at least one bubble.
\\\\
Consider each intersection of $\alpha$ with a bubble $B$, such that $\alpha$ bounds the overstrand of $B$ to the interior of $D$. For each such intersection, there are two intersection curves given by $F\cap S_-$ entering $D$ (Figure~\ref{fig:primeDiagram2}). Thus there are an even number of intersection curves entering $D$ on $S_-$, all of which connect within $D$, and thus form arcs with endpoints on $\alpha$. Of these arcs, choose the arc $\beta$ that is outermost in $D$. There are three possibilities: $\beta$ connects two sides of one bubble, $\beta$ connects two different bubbles with a bubble in between, or $\beta$ connects two curves passing through one side of one bubble.
\\

\begin{figure}[h]
\includegraphics[scale=0.5]{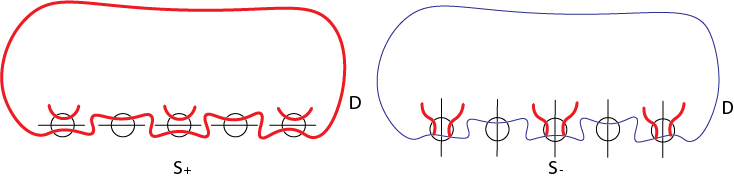}
\centering
\caption{Intersection curves of $F \cap S_-$ entering $D$.}
\label{fig:primeDiagram2}
\end{figure}

Assume $\beta$ intersects both sides of one bubble. We see from Lemma~\ref{lem:primeLemmas}~(v), that this can only occur when $\pa B_+\cup\beta$ contains a simple nontrivial curve; but this contradicts that $\beta$ is contained in $D$.
\\\\
Assume $\beta$ connects two different bubbles with a bubble $B$ in between (Figure~\ref{fig:primeDiagram3}). Again, it follows that $\beta$ must hit both sides of $B$, and as $\beta$ is contained in $D$, this contradicts Lemma~\ref{lem:primeLemmas}~(v).
\\

\begin{figure}[h]
\includegraphics[scale=0.5]{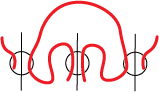}
\centering
\caption{When $\beta$ connects two different bubbles with a bubble $B$ in between.}
\label{fig:primeDiagram3}
\end{figure}

Finally, assume $\beta$ connects two intersection arcs both passing through the same side of the same bubble (Figure~\ref{fig:primeDiagram4}). As $\pa B_+ \cup \beta$ does not contain a simple closed curve that is nontrivial on $S_-$, this contradicts that all such curves were removed with Lemma~\ref{lem:primeLemmas}~(iv).
\begin{figure}[h]
\includegraphics[scale=0.5]{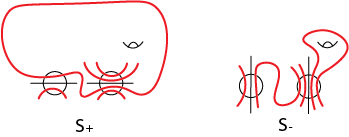}
\centering
\caption{When $\beta$ connects two strands passing through the same side of one bubble.}
\label{fig:primeDiagram4}
\end{figure}
\\\\
Therefore there are at least two intersections with $L$, which prevent the intersections with bubbles from being consecutive.
 \end{proof}

Now we are ready to show that $P$ is not obviously prime. This completes the proof of Theorem~\ref{thm:primeiffobv}, i.e. a fully alternating link $L$ in $S \times I$ with a reduced fully alternating projection $P$ is prime if and only if $P$ is obviously prime.

\begin{proof}[Completion of Proof of Theorem~\ref{thm:primeiffobv}]
Observe that since $F$ is a twice-punctured sphere, there can only be two intersections with $L$ in all intersection curves of $F \cap S_+$. By Lemma~\ref{lem:atLeastTwice}, $F\cap S_+$ consists of exactly one intersection curve $\alpha$, which intersects $L$ exactly twice. Since all intersection curves are trivial on $S_+$, $\alpha$ must be trivial on $S_+$.
\\\\
Suppose $\alpha$ hits a bubble $B$. Let $\sigma$ be the uppermost saddle of $B$. Then, since $\alpha$ is the only intersection curve, both arcs of $\sigma \cap \pa B$ must belong to $\alpha$. If an arc of $\alpha \setminus B$ satisfies the hypothesis of Lemma~\ref{lem:primeLemmas}~(v), then we obtain a contradiction.
\\\\
Suppose that no arc of $\alpha \setminus B$ satisfies the hypothesis of Lemma~\ref{lem:primeLemmas}~(v); in other words, suppose that for every arc $\beta$ of $\alpha \setminus B$, $\pa B_+ \cup \beta$ contains a simple closed curve that is nontrivial on $S_+$.
Let $S'$ be the surface obtained from $S_+$ by replacing the upper hemisphere of $B$ with the lower hemisphere. 
Then $\alpha\cup \pa \sigma$ contains a simple closed curve $\gamma$ that is nontrivial on $S'$. But $\gamma$ is an intersection curve  of $F$ and the surface $S'$. Therefore it is trivial on $F$, which contradicts the fact that $S'$ is incompressible in $S\times I$.(See Figure \ref{fig:primeBubbleBothSides}).
\\
\begin{figure}[h]
\includegraphics[scale=0.3]{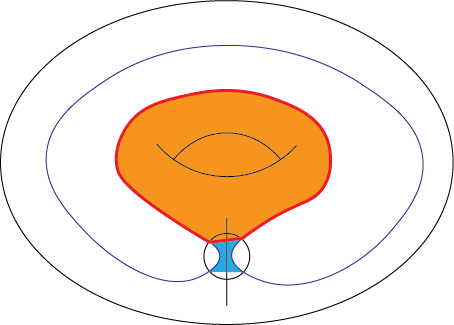}
\centering
\caption{When $\alpha \cup \pa \sigma$ contains a nontrivial simple closed curve.}
\label{fig:primeBubbleBothSides}
\end{figure}

Thus $\alpha$ does not intersect any bubbles, so $\alpha$ is a twice-punctured circle which bounds a disk $D$ on $S_+$. Since $F$ is an essential twice-punctured sphere, $D$ must contain at least one crossing of $L$. Thus $P$ is not an obviously prime projection of $L$.
\end{proof}

\section{Alternating links in more general 3-manifolds}
\label{sec:extensions}
Given an orientable hyperbolic 3-manifold $M$, we know that $M$ does not contain any essential spheres, tori  or annuli by Thurston's theorem. Furthermore, there are no projective planes or Klein bottles, because in an orientable 3-manifold, the boundary of the neighborhood of a projective plane is an essential sphere, and the boundary of the neighborhood of a Klein bottle is an essential torus.

\begin{lemma}\label{lem:nbd}
Let $S$ be a surface with negative Euler characteristic and let $N$ be an $I$-bundle over $S$. Let $L$ be a prime link in $N$ with a fully alternating projection on $S$. Then, $N\setminus L$ is hyperbolic.
\end{lemma}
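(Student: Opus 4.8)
The plan is to verify the hypotheses of the version of Thurston's Hyperbolization Theorem~\cite{thurston} that permits totally geodesic boundary: a compact, irreducible, boundary-irreducible, atoroidal, anannular $3$-manifold which is not an $I$-bundle over a surface (nor one of the few small Seifert fibered exceptions) has interior carrying a complete finite-volume hyperbolic metric, with the non-toral boundary realized as totally geodesic surfaces and the torus boundary components as cusps. This is the form needed here, since $\partial N$ is the associated $S^0$-bundle of $N$, hence a double cover of $S$, so its components are closed surfaces of negative Euler characteristic rather than tori; the cusps of $N\setminus L$ come from $L$.

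The bulk of the work is to re-run Sections~\ref{sec:SphereTorus} and~\ref{sec:annulus} verbatim with $S\times I$ replaced by $N$, using the same bubbles, taking $S_0$ to be the zero-section of $N$, and defining $S_+$, $S_-$ as before. First I would check that nothing in Section~\ref{sec:SphereTorus} used orientability of $S$ or the product structure essentially: the genuine inputs are that $S_0$ (hence $S_+$, $S_-$) is incompressible in $N$, which holds for the zero-section of any $I$-bundle over a surface of negative Euler characteristic, and that in Lemma~\ref{lem:existsTrivialCurve} adjacent faces of the $4$-regular graph on $F$ lie on opposite sides of $S_+\cup S_-$, which is simply checkerboard-colorability of an alternating projection and is valid on any surface; the saddle/bubble arguments of Lemmas~\ref{lem:trivialIffTrivial}--\ref{lem:noMerComSphereTorus} are local, and together with the hypothesis that $L$ is prime in $N$ they rule out essential spheres and tori in $N\setminus L$. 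For Section~\ref{sec:annulus} the case analysis of Lemmas~\ref{lem:noAnnulusOnL} and~\ref{lem:noAnnulusOnLS} must be reworked so as to allow $N$ to be a twisted $I$-bundle (so $\partial N$ is connected) and $S$ to be non-orientable; here the Euler-characteristic and boundary-component counts used to reach a contradiction only become more favorable because $\chi(S)<0$, as the exceptional product or Seifert structures arising there would force $\chi(S)=0$. This leaves essential annuli excluded, so $N\setminus L$ is irreducible, atoroidal and anannular. Boundary-irreducibility is easy: $\partial N\to S$ is a covering and $N$ deformation retracts to $S$, so $\partial N$ is $\pi_1$-injective and hence incompressible in $N$, a fortiori in $N\setminus L$, while the torus boundary components are incompressible because $L$ is prime with a connected, crossing-bearing projection. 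Finally $N\setminus L$ is neither an $I$-bundle nor a small Seifert fibered space, since its boundary contains both torus components (from $L$) and higher-genus components (from $\partial N$). If $N$ is non-orientable one passes to the orientation double cover, applies the orientable statement, and descends the hyperbolic structure using Mostow rigidity to promote the deck involution to an isometry.

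I expect the main obstacle to be the second step: re-auditing Section~\ref{sec:annulus}, where the elimination of essential annuli really did exploit the product structure of $S\times I$ and an exact count of boundary components, so the twisted-bundle case and the non-orientable base require the genus and boundary-count inequalities to be rewritten carefully rather than merely transcribed. A secondary nuisance is pinning down and citing the precise form of the Hyperbolization Theorem for (possibly non-orientable) Haken manifolds carrying simultaneously cusps and higher-genus totally geodesic boundary, and confirming that $N\setminus L$ avoids every exceptional case in its statement.
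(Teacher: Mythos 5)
There is a genuine gap, and it comes in two places. First, your central plan --- rerunning Sections~\ref{sec:SphereTorus} and~\ref{sec:annulus} ``verbatim'' inside $N$ --- does not survive the twisted-bundle case ($S$ non-orientable, $N=S\widetilde{\times}I$ orientable), which is exactly the case the lemma adds beyond Theorem~\ref{thm:main}. In a twisted $I$-bundle the zero-section is one-sided and non-separating, so the objects the bubble machinery is built on are simply not there: there is no globally consistent ``up'' along the fibers (transporting around an orientation-reversing loop in $S$ exchanges over- and understrands, so even the over/under labels and hence $S_+$, $S_-$ are not globally defined), no decomposition of $N\setminus S_0$ into an ``above'' and a ``below'' piece, and consequently no checkerboard assignment of $+$ and $-$ to adjacent faces of the $4$-regular graph in Lemma~\ref{lem:existsTrivialCurve}. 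So the ``genuine inputs'' are not just incompressibility of $S_0$; two-sidedness and the product structure are used essentially, which is precisely why the paper does \emph{not} argue directly in $N$ but instead passes to the orientation double cover $\widetilde N=\widetilde S\times I$ even when $N$ is orientable, and reduces to Theorem~\ref{thm:main} there. (Your dismissal of Section~\ref{sec:annulus} as ``more favorable'' also glosses over the fact that Lemma~\ref{lem:noAnnulusOnLS} doubles across the two product boundaries to get $S\times S^1$, and Lemma~\ref{lem:noAnnulusOnL} quotes the crossing-number result for $S\times I$; neither transcribes to a twisted bundle without new argument.)

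Second, in the cases where you do invoke the double cover ($N$ non-orientable), you apply ``the orientable statement'' to $\widetilde N\setminus\widetilde L$ without verifying its hypotheses upstairs. Full alternation lifts easily, but primeness of $L$ in $N$ does not obviously imply primeness of $\widetilde L$ in $\widetilde N$: an essential twice-punctured sphere upstairs need not be equivariant, and its image downstairs is only an immersed twice-punctured sphere. The bulk of the paper's proof of this lemma is devoted to exactly this point --- resolving the double curves of the immersed image by surgery (handling the trivial/trivial, nontrivial/nontrivial, and mixed cases separately) to manufacture an embedded essential twice-punctured sphere in $N$ and contradict primeness, and then checking at least one of the resulting pieces is not boundary-parallel. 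Without this lifting argument your reduction to Theorem~\ref{thm:main} (followed by Mostow rigidity, which is the same descent step the paper uses) is unjustified. Your attention to the precise form of hyperbolization with totally geodesic higher-genus boundary is a fair point of care, but it does not repair these two gaps.
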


\begin{proof}
We split into two cases: $N$ is orientable or $N$ is non-orientable. First suppose $N$ is orientable. By Theorem \ref{thm:main}, we already know that the lemma holds when $S$ is orientable, in which case $N=S\times I$. Hence, we only need to consider the case where $S$ is non-orientable. In this case, we use $N=S\widetilde{\times}I$ to denote the orientable twisted $I$-bundle over $S$. We know that we can always find an orientable double cover $\widetilde{N}=\widetilde{S}\times I$ of $N$, where $\widetilde{S}$ is orientable. We aim to show that the lift $\widetilde{L}$ of $L$ in $\widetilde{N}$ is fully alternating on $\widetilde{S}$ and prime in $\widetilde{N}$.
\\\\
To show that $\widetilde{L}$ is alternating, we can think of $S$ as a polygon $R$ with edges identified so that $\widetilde{S}$ is two copies of $R$ glued together with edges identified, denoted by $\widetilde{R}$. Certainly $\widetilde{L}$ is alternating in the interior of $\widetilde{R}$, since $L$ is alternating in the interior of $R$ and along the glued edges. And, because $L$ must alternate across identified edges of $R$, $\widetilde{L}$ must alternate across identified edges of $\widetilde{R}$. Thus, $\widetilde{L}$ is alternating on $\widetilde{S}$. We also know that disks in $S$ always lift to disks in $\widetilde{S}$, so $\widetilde{L}$ is fully alternating on $\widetilde{S}$.
\\\\
To show that $\widetilde{L}$ is prime in $\widetilde{N}$, consider an essential twice-punctured sphere $\widetilde{A}$ in $\widetilde{N}\setminus\widetilde{L}$. We know $\widetilde{A}$ must map to an immersed twice-punctured sphere $A$ in $N$ 
where any self-intersections are comprised of double curves. We want to construct from $A$ an embedded twice-punctured sphere in $N$ that is essential, contradicting the assumption that $L$ is prime in $N$.
\\\\
Any self-intersection curve on $A$ can be associated to two curves on $\widetilde{A}$ via the immersion. If both the curves associated to a self-intersection are trivial, then that self-intersection can be isotoped away since $\widetilde{N}$ does not contain any essential spheres.
\\\\
If both the curves associated to a self-intersection are non-trivial, we can perform surgery on $A$ by smoothing the intersecting sheets in two possible ways to eliminate the non-trivial self-intersections as shown in cross-section in Figure \ref{fig:smoothing}. Discarding the torus component in the second case, this results in two twice-punctured spheres $A_1$ and $A_2$ in $N$ on either side of the sheet containing the corresponding double-curve, with punctures coming from the link. Note that after smoothing $k$ double-curves associated to two non-trivial curves on $\widetilde{A}$ we obtain $2^k$ twice-punctured spheres, half of which lie on a single side of any smoothed double-curve and half of which lie on the other side. So, if $A$ has $k$ double-curves associated to two non-trivial curves on $\widetilde{A}$, after smoothing them all we have $2^k$ twice-punctured spheres $\{A_i\}$, each with no double curves that are associated to two trivial or two non-trivial curves on $\widetilde{A}$.
\\\\
\begin{figure}[h]
\includegraphics[scale=.5]{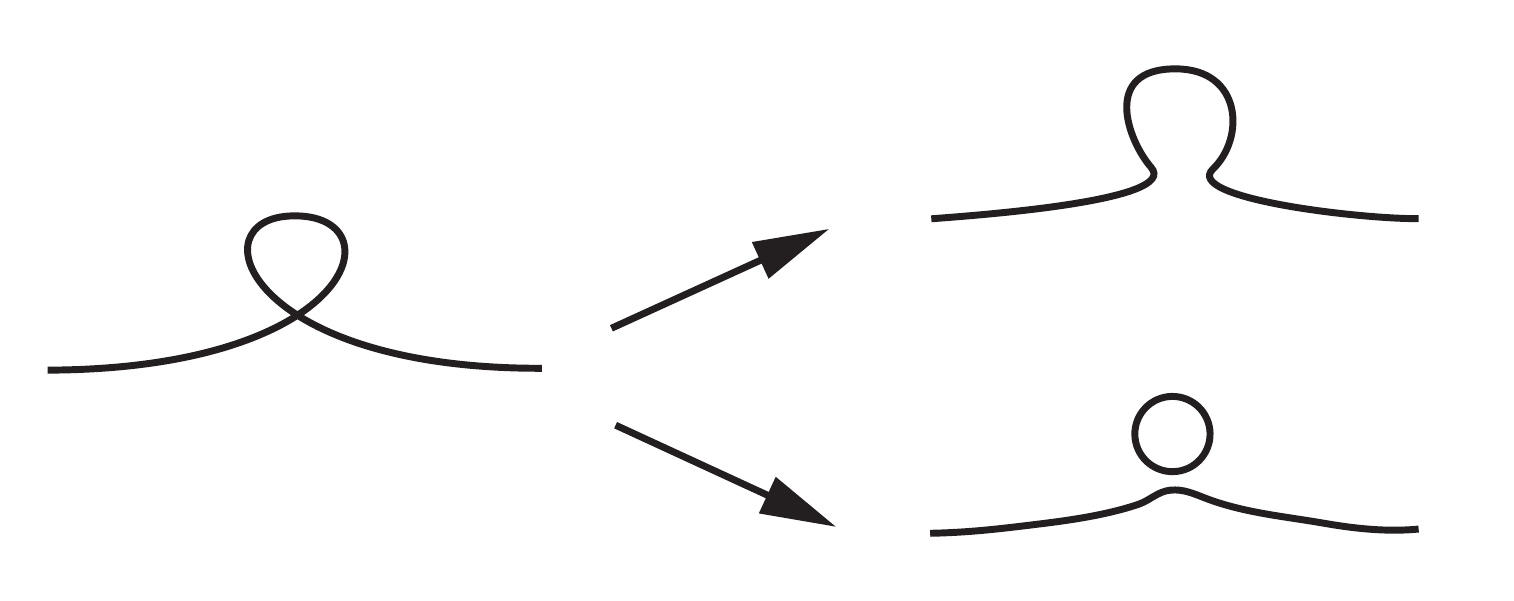}
\caption{Smoothing of a self-intersecting curve.}
\label{fig:smoothing}
\end{figure}
Finally, if a self-intersection is associated to a trivial curve and a non-trivial curve, consider the innermost trivial curve $\alpha$ that is paired with a non-trivial curve via a double-curve in $A$. Since we've eliminated double-curves in $A$ associated to two trivial curves, the double-curve associated to $\alpha$ must be an innermost trivial curve on $A$. But since it is also non-trivial on $A$, it must bound a compression disk in $A$ which lifts to a compression disk in $\widetilde{A}$ bounded by the corresponding non-trivial curve on $\widetilde{A}$, contradicting essentiality of $\widetilde{A}$ in $\widetilde{N}$.
\\\\
Thus, each of the resulting $\{A_i\}$ has no self-intersections, and they are all incompressible since $\widetilde{A}$ is. It suffices to show that at least one of the $\{A_i\}$ is not boundary-parallel in $N$. Recall that a twice-punctured sphere is boundary-parallel if its closure bounds a ball containing a trivial arc of $L$. Note that each $A_i$ bounds a ball to at most one side and that each $A_i$ that bounds a ball must do so to the same side. So, if every $A_i$ bounds a ball containing a trivial arc of $L$, then $\widetilde{A}$ must bound a ball containing a trivial arc of $\widetilde{L}$ since balls in $N$ lift to balls in $\widetilde{N}$, contradicting our original choice of $\widetilde{A}$.
\\\\
Now, suppose that $N$ is non-orientable. There are three sub-cases:

\begin{enumerate}

\item $N=S\times I$ for $S$ non-orientable
\item $N=S\widetilde{\times} I$ for $S$ orientable
\item $N=S\widetilde{\times} I$ for $S$ non-orientable.

\end{enumerate}

In sub-cases (1) and (2), $N$ is double-covered by $\widetilde{N}=\widetilde{S}\times I$ for $\widetilde{S}$ orientable. In sub-case (3), $N$ is double-covered by $\widetilde{N}=\widetilde{S}\widetilde{\times} I$ for $\widetilde{N}$ orientable and $\widetilde{S}$ non-orientable. By similar arguments as in the case where $N$ is orientable, we know that the lift $\widetilde{L}$ of $L$ is prime and fully alternating in each orientable double-cover $\widetilde{N}$.
\\\\
In all cases except sub-case (3), we must have that $\widetilde{N}\setminus\widetilde{L}$ is hyperbolic by Theorem~\ref{thm:main}. By the Mostow Rigidity Theorem, we know that the deck transformation associated to the covering $\widetilde{N}\to N$ can be realized as an isometry. Thus, $N\setminus L$ must be hyperbolic as well. Finally, in sub-case (3), we reduce to the case where $N$ is orientable after taking the double cover and then apply Mostow Rigidity again to obtain the desired result.

\end{proof}

With this lemma, we are now ready to prove Theorem~\ref{thm:threemanifold}, which allows us to remove prime, fully alternating links on essential, closed surfaces in hyperbolic $3$-manifolds and preserve hyperbolicity.


\begin{proof}[Proof of Theorem~\ref{thm:threemanifold}]
We show that in the resulting manifold $M\setminus L$, we still have no essential spheres, tori or annuli.
\\\\
First note that any neighborhood $N$ of $S$ in $M$ will be an $I$-bundle over $S$. In the case that $M$ is non-orientable, we'll take an orientable double cover $\widetilde{M}$ of $M$ that lifts $N$ to its corresponding orientable double cover as given in the proof of Lemma~\ref{lem:nbd}. As in the argument at the end of the proof of Lemma~\ref{lem:nbd}, it suffices to show that $\widetilde{M}\setminus\widetilde{L}$ is hyperbolic. And, since $M$ is hyperbolic by assumption, $\widetilde{M}$ is too, so we can completely reduce to the case where $M$ is orientable.
\\\\
Now, we claim that any essential sphere, torus or annulus in $M\setminus L$ must intersect $N\setminus L$ in at least one disk or annulus. Suppose that there were an essential sphere, torus or annulus $F$ in $M\setminus (N\setminus L)$.
\\\\
Note that there are no boundary-parallel spheres in $M$ because hyperbolic manifolds do not have sphere boundaries. If $F$ is a compressible sphere in $M$, it must bound a ball $B$ in $M$. Because $S$ is incompressible in $M$, $N$ cannot be contained in $B$, so $F$ will still bound a ball in $M\setminus L$.
\\\\
If $F\subset M\setminus N$ is a torus that is boundary-parallel in $M$ but not in $M\setminus L$, then $F$ must contain a curve that bounds a once-punctured disk $D$ in $M$ that is punctured multiple times in $M\setminus L$, i.e. $L$ must puncture $D$. Since $L\subset N$, $D$ must intersect $N$ and because $S$ is incompressible in $M$, $D$ must intersect $S$ trivially, in which case we can isotope it off of $N$. But, then $D$ cannot be punctured by $L$, contradicting our assumption regarding boundary-parallelity.
\\\\
If $F\subset M\setminus N$ is a torus that has a compression disk $D$ in $M$ but not in $M\setminus L$, then $D$ must intersect $S$. Since $S$ is incompressible, we know that $D$ must intersect $S$ in a trivial curve on $S$, so we can isotope $D$ off of $N$. As before, $D$ must not be punctured by $L$, contradicting the assumption that $F$ is compressible in $M\setminus L$.
\\\\
If $F\subset M\setminus N$ is an annulus that is boundary-parallel in $M$ but not in $M\setminus L$, then $F\cup \pa M$ must contain a curve consisting of one arc from $F$ and one arc from $\pa M$ that bounds a disk $D$ punctured by $L$. Thus, $D$ must intersect $S$ and since $S$ is incompressible in $M$, $D$ must intersect $S$ trivially, so we can isotope $D$ off of $N$, leading to a similar contradiction as in the previous cases. If $F\subset M\setminus N$ is an annulus that is compressible in $M$ but incompressible in $M\setminus L$, then all compression disks of $F$ in $M$ must be punctured by $L$, so we can apply a similar argument.
\\\\
Thus, any essential sphere, torus or annulus in $M\setminus L$ must have non-empty intersection with $N$ and it follows that the intersection must contain a disk or annulus component. So, let $F$ be an essential sphere, torus or annulus with the least number of intersection curves in $\pa N$.
\\\\
If $F$ intersects $N$ in at least one disk, we can push the disk out from the boundary to decrease the number of intersection curves while preserving essentiality, which contradicts our assumption regarding minimality of intersection curves. 
\\\\
If $F\cap N$ does not contain any disks, it must contain an annulus $A$. We know that $A$ must be incompressible because $F$ is. $A$ cannot be boundary-parallel in $N$, because otherwise we could push it out through the boundary and decrease the number of intersection curves in $F\cap \pa N$. So, $A$ is an essential annulus completely contained in $N$, which contradicts Lemma~\ref{lem:nbd}.
\end{proof}

\begin{example} Here we provide an example of applying Theorem~\ref{thm:threemanifold} to generate a new hyperbolic manifold by removing a prime, fully alternating link from a hyperbolic 3-manifold with an essential closed surface. Let $M$ be the complement of the hyperbolic link shown in Figure~\ref{fig:manifoldwithsurface}. 
\begin{figure}[h]
\includegraphics[width=8cm,height=6.5cm]{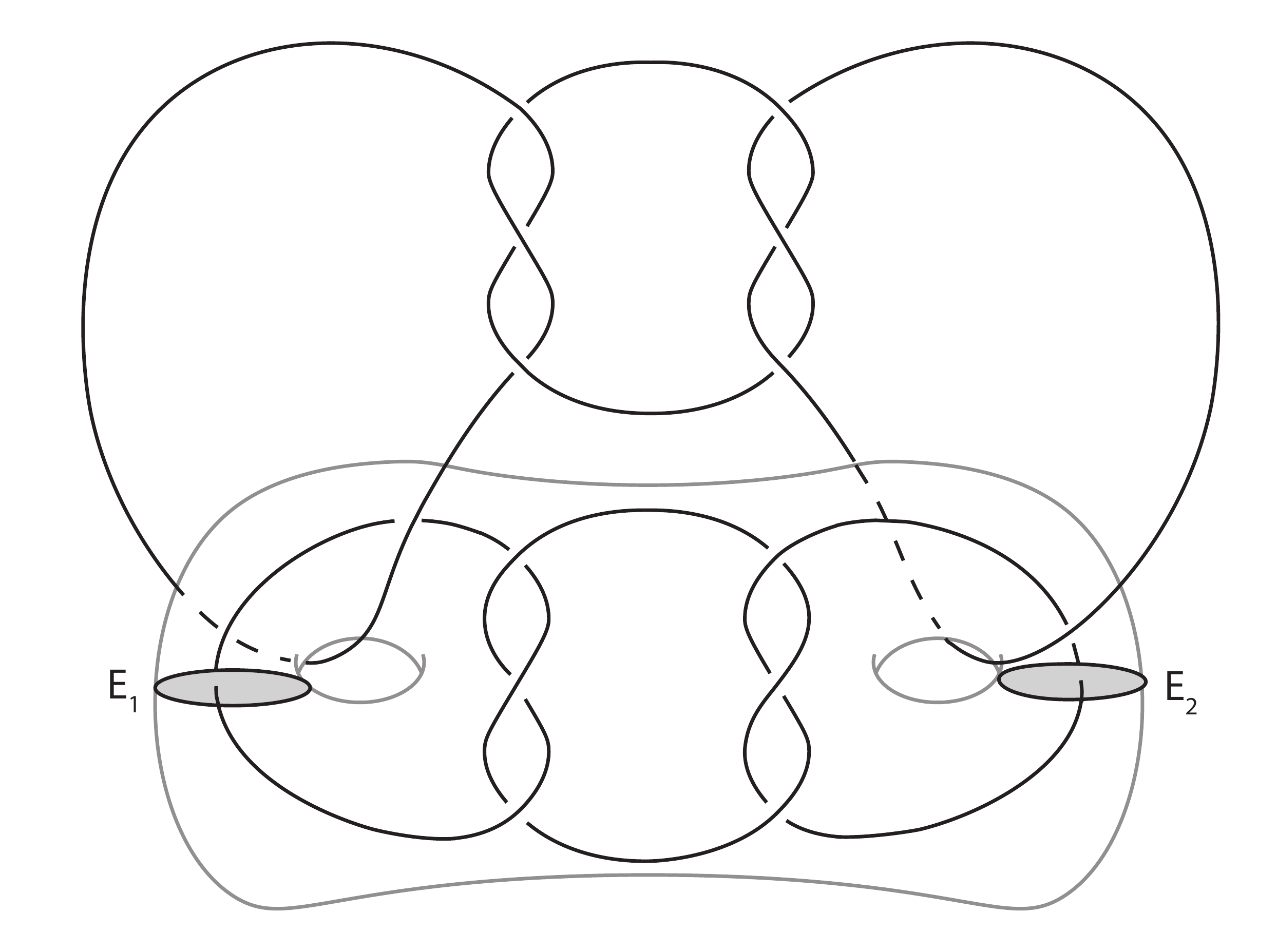}
\centering
\caption{A link complement containing an essential closed surface.}
\label{fig:manifoldwithsurface}
\end{figure}
As illustrated in Figure~\ref{fig:manifoldwithsurface}, there is a surface $S$ of genus 2 in $M$.
\\\\
We show that $S$ is incompressible. We surger $S$ along the shaded punctured disks $E_1$ amd $E_2$ in Figure~\ref{fig:manifoldwithsurface} into a four-punctured sphere $S'$. Note that $S'$ is incompressible to the inside because the tangle contained in it is not rational.
\\\\Let $D \subset M$ be a potential compression disk of $S$ to its inside such that $D$ has the fewest possible number of intersection curves with $E_1$ and $E_2$. Then each $E_i$ intersects $D$ in arcs.


If there exists an intersection arc, then there must exist an outermost arc, which together with an arc on $\pa D$ must cobound a disk $D' \subset D$ with boundary in $S'$.
 The disk $D'$ must then isotope onto $S'$, allowing us to isotope $D$ to eliminate the outermost intersection arc, contradicting the fact we chose $D$ to have a minimal number of intersection arcs.
We conclude that $S$ is incompressible to the inside. By symmetry, $S$ is also incompressible to the outside. Clearly, $S$ is not boundary-parallel, so it is essential. Now, Theorem~\ref{thm:threemanifold} implies that any prime, fully alternating link on $S$ can be removed from $M$ such that the resulting manifold is hyperbolic. For instance, the thick link in Figure~\ref{fig:addingLinkExample} is prime and fully alternating. Adding this link to the original link produces another hyperbolic link.

\begin{figure}[h]
\includegraphics[width=9.5cm,height=8cm]{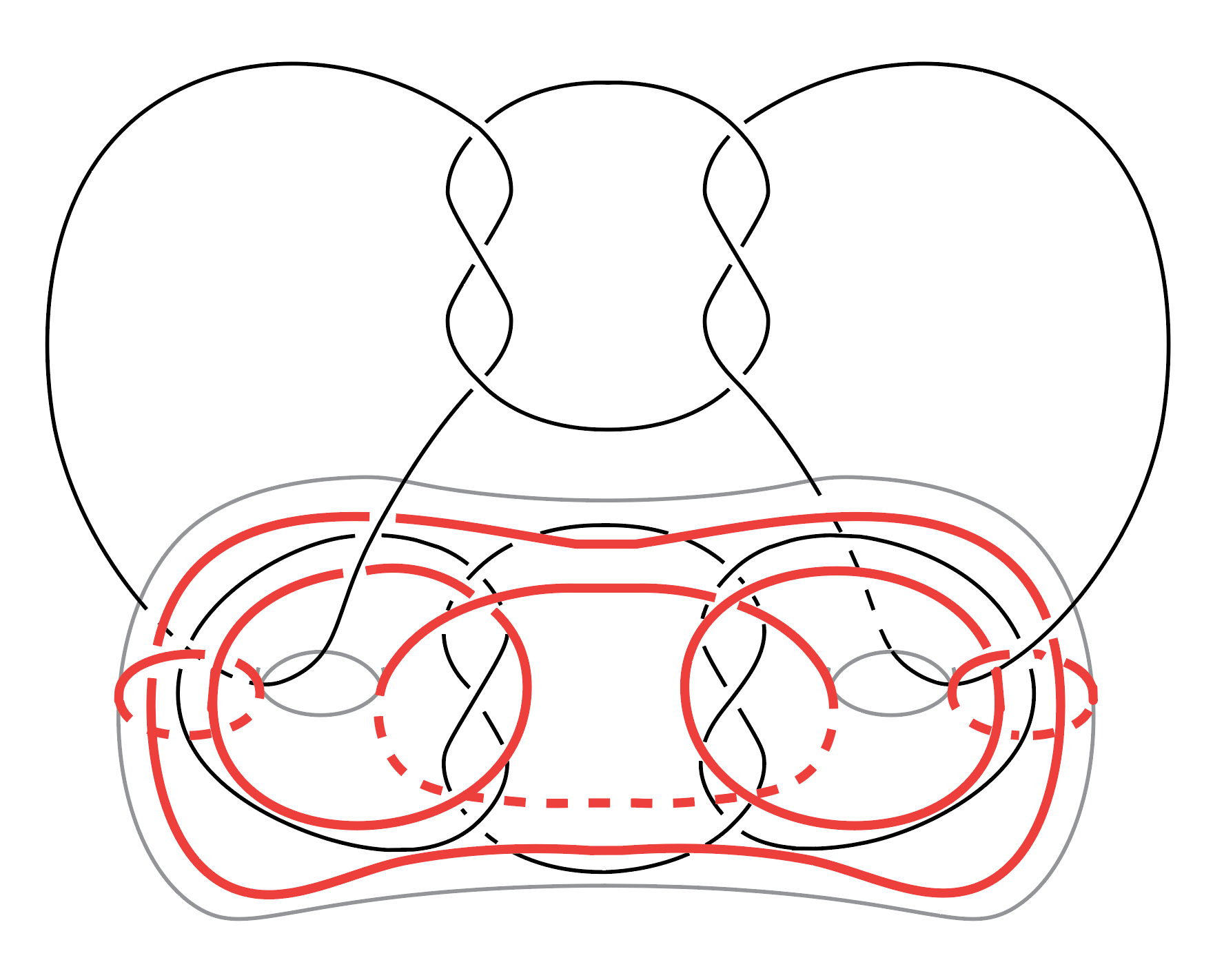}
\centering
\caption{A hyperbolic link obtained by removing a prime, fully alternating link from an essential surface on the complement of a hyperbolic link.}
\label{fig:addingLinkExample}
\end{figure}

Note that the only assumption on the tangle inside $S'$ was that it was a nontrivial non-rational tangle of two arcs. So this yields many more examples, with no requirement that the inner and outer tangles be the same.

\end{example}

\subsection*{Acknowledgments}  
The authors are grateful for support they received from NSF Grant DMS-1659037 and the Williams College SMALL REU program. 



\begin{thebibliography}{9}

\bibitem{adams toroidal}
C. Adams, ``Toroidally alternating knots and links.'' \emph{Topology} \textbf{33}, no. 2 (1994): 353-369.

\bibitem{tiling}
C. Adams, A. Calderon, and N. Meyer, ``Generalized Bipyramids and Hyperbolic Volumes of Alternating $k$-Uniform Tiling Links.'' arXiv preprint at arXiv:1603.03715 (2016).

\bibitem{crossing number}
C. Adams, T. Fleming, M. Levin, and A. M. Turner, ``Crossing number of alternating knots in $S\times I$.'' \emph{Pacific journal of mathematics} \textbf{203}, no.1 (2002): 1-22.

\bibitem{haken}
I. Agol, ``The Virtual Haken Conjecture.'' Doc. Math. 18 (2013)1045-1087 (with appendix by Agol, Groves and Manning).

\bibitem{CKP}
A. Champanerkar,I. Kofman,  J. Purcell, `Geometry of biperiodic alternating links', to appear.

\bibitem{hatcher}
A. Hatcher, ``Notes on basic 3-manifold topology.'' (2000): 1-72.

\bibitem{hayashi}
C. Hayashi, ``Links with alternating diagrams on closed surfaces of positive genus.'' \emph{Math. Proc. Camb. Phil. Soc.}, \textbf{117} (1995): 113-128.

\bibitem{HP}
J. Howie, J. Purcell, ``Geometry of Alternating Links on Surfaces'', arXiv preprint arXiv:1712.01373 (2017).

\bibitem{kauffman}
L. H. Kauffman, ``State Models and the Jones Polynomial.'' \emph{Topology} \textbf{26} (1987): 395-407.

\bibitem{menasco}
W. Menasco, ``Closed Incompressible Surfaces in Alternating Knot and Link Complements.'' \emph{Topology} \textbf{23}, no.1 (1984): 37-44.

\bibitem{murasugi}
K. Murasugi, ``The Jones Polynomial and Classical Conjectures in Knot Theory.'' \emph{Topology} \textbf{26} (1987): 187-194.

\bibitem{thistlethwaite}
M. B. Thistlethwaite, ``A Spanning Tree Expansion of the Jones Polynomial.'' \emph{Topology} \textbf{26} (1987): 297-309.

\bibitem{thurston}
W. Thurston, \emph{The Geometry and Topology of 3-manifolds.} Princeton Notes.


\end{thebibliography}
\end{document}